\renewcommand{\arraystretch}{1.2}
\newcommand{\tallopenbox}{\leavevmode
  \hbox to.6em{%
  \hfil\vrule
  \vbox to.65em{\hrule width.35em\vfil\hrule}%
  \vrule\hfil}}
\newcommand{\re}{\mathbb{R}}
\newcommand{\mR}{\mathbb{R}}
\newcommand{\mC}{\mathbb{C}}
\newcommand{\cpx}{\mathbb{C}}
\newcommand{\N}{\mathbb{N}}
\newcommand{\diag}{\mbox{diag}}
\newcommand{\lmd}{\lambda}
\newcommand{\eps}{\epsilon}
\newcommand{\Dt}{\Delta}
\def\af{\alpha}
\def\rank{\mbox{rank}}
\newcommand{\reff}[1]{(\ref{#1})}
\newcommand{\mc}[1]{\mathcal{#1}}
\newcommand{\qmod}[1]{\mbox{QM}[#1]}
\newcommand{\ideal}[1]{\mbox{Ideal}[#1]}
\newcommand{\st}{\mathit{s.t.}}
\newcommand{\tr}[1]{\operatorname{tr}(#1)}
\newcommand{\uvec}[1]{\operatorname{uvec}(#1)}
\newcommand{\vect}[1]{\operatorname{vec}(#1)}
\newcommand{\bdes}{\begin{description}}
	\newcommand{\edes}{\end{description}}
\newcommand{\bal}{\begin{align}}
	\newcommand{\eal}{\end{align}}
\newcommand{\bnum}{\begin{enumerate}}
	\newcommand{\enum}{\end{enumerate}}
\newcommand{\bit}{\begin{itemize}}
	\newcommand{\eit}{\end{itemize}}
\newcommand{\bea}{\begin{eqnarray}}
	\newcommand{\eea}{\end{eqnarray}}
\newcommand{\be}{\begin{equation}}
	\newcommand{\ee}{\end{equation}}
\newcommand{\baray}{\begin{array}}
	\newcommand{\earay}{\end{array}}
\newcommand{\bsry}{\begin{subarray}}
	\newcommand{\esry}{\end{subarray}}
\newcommand{\bca}{\begin{cases}}
	\newcommand{\eca}{\end{cases}}
\newcommand{\bcen}{\begin{center}}
	\newcommand{\ecen}{\end{center}}
\newcommand{\bbm}{\begin{bmatrix}}
	\newcommand{\ebm}{\end{bmatrix}}
\newcommand{\bmx}{\begin{matrix}}
	\newcommand{\emx}{\end{matrix}}
\newcommand{\bpm}{\begin{pmatrix}}
	\newcommand{\epm}{\end{pmatrix}}
\newcommand{\btab}{\begin{tabular}}
	\newcommand{\etab}{\end{tabular}}
\theoremstyle{plain}
\newtheorem{theorem}{Theorem}[section]
\newtheorem{lem}[theorem]{Lemma}
\newtheorem{cor}[theorem]{Corollary}
\newtheorem{ass}[theorem]{Assumption}
\newtheorem*{claim*}{Claim}
\theoremstyle{definition}
\newtheorem{example}[theorem]{Example}
\numberwithin{equation}{section}
\numberwithin{table}{section}
\def\r{{\mathbb{R}}}
\def\n{{\mathbb{N}}}
\def\n{{\mathbb{N}}}
\begin{document}

\title{Lagrange multiplier  expressions for matrix polynomial optimization and tight relaxations%\thanks{Grants or other notes
%about the article that should go on the front page should be
%placed here. General acknowledgments should be placed at the end of the article.}
}
%\subtitle{Do you have a subtitle?\\ If so, write it here}

\author[Lei Huang]{Lei Huang}
\address{Lei Huang, Jiawang Nie, Jiajia Wang, Department of Mathematics,
University of California San Diego,
9500 Gilman Drive, La Jolla, CA, USA, 92093.}
\email{leh010@ucsd.edu,njw@math.ucsd.edu,jiw133@ucsd.edu}
\author[Jiawang Nie]{Jiawang~Nie}

\author[Jiajia Wang]{Jiajia~Wang}

\author[Lingling Xie]{Lingling Xie}

\address{Lingling Xie, School of Mathematics and Statistics, Hefei Normal University,
No. 327 Jinzhai Road, Hefei, Anhui, China, 230061.}

\email{linglingxie@hfnu.edu.cn}

\subjclass[2020]{90C23,65K05,90C22}

\keywords{matrix polynomial, optimality condition,
Lagrange multiplier matrix, moment relaxation, sum of squares}

% The correct dates will be entered by the editor

\begin{abstract}
This paper studies matrix  polynomial optimization.
We investigate explicit expressions for Lagrange multiplier matrices from
the first order optimality conditions.  The existence of these expressions
can be shown under the nondegeneracy condition. Using  multiplier matrix expressions, we propose a strengthened Moment-SOS hierarchy for solving
matrix polynomial optimization. Under some general assumptions,
we show that this strengthened hierarchy is tight, or equivalently, it has finite convergence.
We also study how to detect tightness and  extract optimizers.
Preliminary numerical experiments demonstrate the efficiency of the strengthened hierarchy.
\end{abstract}

\maketitle

\section{Introduction}
Consider the matrix polynomial optimization (MPO)
\be  \label{nsdp}
\left\{ \baray{rl}
\min\limits_{ x\in \re^n } & f(x)  \\
\st &  	G(x) \succeq 0, \\
\earay \right.
\ee
where $f(x)$ is a polynomial in $x:=(x_1,\dots,x_n)$
and $G(x)$ is an $m$-by-$m$ symmetric matrix polynomial in $x$.
The inequality $G(x) \succeq 0$ means that the matrix $G(x)$ is positive semidefinite.
Denote by $K$ the feasible set of \reff{nsdp}.
Let $f_{\min}$ be the optimal value of \reff{nsdp}.
The MPO \reff{nsdp} has broad applications in various fields
\cite{HHP15,hdlb,hdlb2,HDLJ,klms,schhol,yzgf}.

When the matrix $G(x)$ is diagonal, \reff{nsdp} reduces to
classical scalar polynomial optimization with scalar inequality constraints.
The Moment-SOS hierarchy, proposed by Lasserre~\cite{Las01},
is a powerful approach for solving scalar polynomial optimization.
It consists of a sequence of semidefinite programming relaxations
using moments and sum-of-squares. The convergence theory of the Moment-SOS hierarchy
has been  extensively studied.  For its asymptotic convergence rates,
we refer to the recent survey \cite{lssurvey}. Under certain conditions, this hierarchy
has finite convergence; see the works
\cite{dKlLau11,hl23,hny2,Las09,LLR08,Lau07,nieopcd}.
We refer to the books and surveys \cite{HDLJ,LasBk15,Lau09,niebook,Sch09}
for more general introductions to polynomial optimization.

To solve the MPO \reff{nsdp}, a matrix Moment-SOS hierarchy was proposed in \cite{hdlb,schhol}.
When the quadratic module generated by $G(x)$ is Archimedean (a condition slightly stronger than compactness),
%%D of the feasible set $K$),
the matrix Moment-SOS hierarchy can be shown to have asymptotic convergence and the convergence rate  is studied in \cite{hl24}.	
Recently, it was shown in \cite{hn24} that this hierarchy also has finite convergence
if, in addition, the nondegeneracy condition, strict complementarity
condition and second order sufficient condition hold at every global minimizer of \reff{nsdp}.
%When the feasible set $K$ is unbounded, this hierarchy may fail to converge.

The convergence rate of the classical Moment-SOS hierarchy can be slow,
or it may not have finite convergence for some problems \cite{hlm25,nieopcd,Tight18}.
In particular, when the feasible set is unbounded,
its asymptotic convergence cannot be guaranteed.
Optimality conditions are quite useful for strengthening Moment-SOS relaxations.
For unconstrained optimization, a strengthened Moment-SOS hierarchy
based on gradient ideal is proposed in \cite{NDS06}
and it has finite convergence.
Based on the Karush-Kuhn-Tucker (KKT) conditions,
similar strengthened Moment-SOS hierarchies are proposed in \cite{DNP07,niejac}
to solve constrained optimization problems.
%They also have finite convergence under some genericity assumptions.
However, these methods are not computationally convenient,
since they either introduce new variables for Lagrange multipliers or
use maximal minors of Jacobian matrices.
To overcome this issue, Lagrange multiplier expressions (LMEs) are introduced in \cite{Tight18},
and a strengthened Moment-SOS hierarchy is proposed by using those expressions.
This strengthened hierarchy has finite convergence under some genericity assumptions.
Recently, KKT type conditions have also been   introduced to obtain strengthened relaxations
for solving noncommutative polynomial optimization \cite{akgv23},
which rapidly improve the convergence speed.

There has been relatively little work on strengthening the matrix Moment-SOS hierarchy
for solving the MPO \reff{nsdp}
by incorporating  KKT conditions. A partial reason is that the constraint qualifications
and KKT conditions for \reff{nsdp} are much more complicated than the scalar case.
One might consider an alternative equivalent scalar formulation of \reff{nsdp}.
For instance, the matrix inequality $G(x)\succeq 0$ is equivalent to that
all the principal minors are nonnegative, and one might apply the
corresponding strengthened Moment-SOS hierarchies in \cite{DNP07,niejac,Tight18}.
However, this approach is not computationally efficient, as the degrees of the scalar describing polynomials are high \cite{hdlb}.
Even worse, scalarization may cause singularities
(e.g., the constraint qualifications may fail),
which could prevent    the construction of the corresponding strengthened relaxations or the resulting  hierarchy may fail to have finite convergence.
We refer to Section~\ref{sca:mat} for more detailed discussions about this.

The above observations motivate us to construct strengthened Moment-SOS relaxations
for solving \reff{nsdp} by directly incorporating its KKT conditions.
In particular, expressions for Lagrange multiplier matrices
are highly desirable  for solving matrix optimization problems.

\subsection*{Contributions}
To present our main results, we first introduce the KKT conditions for \reff{nsdp}.
Let $\mathcal{S}^{m}$ denote the space of all $m$-by-$m$ real symmetric matrices.
% and denote by $\mathcal{S}_{+}^m$
% the cone of all $m$-by-$m$ symmetric positive semidefinite matrices.
For the matrix function $G(x)= ( G_{st}(x))_{s,t=1,\ldots,m}$ as in \reff{nsdp},
its gradient at $x$ is the linear mapping
$\nabla G(x): \mathbb{R}^n  \rightarrow \mathcal{S}^m$
such that
\[
v   \coloneqq   \left(v_1, \ldots, v_n\right) \, \mapsto \,
\nabla G(x)[v] \, \coloneqq \, \sum_{i=1}^n v_i \nabla_{x_i} G(x),
\]
where $\nabla_{x_i} G(x) $ denotes the partial derivative  of $G$
with respect to $x_i$, i.e.,
\[
\nabla_{x_i} G(x) \, = \, \left(\frac{\partial G_{st}}{\partial x_i}\right)_{s,t=1,\dots,m}.
\]
The adjoint of $\nabla G(x)$ is the linear mapping
$\nabla G(x)^*: \mathcal{S}^m \rightarrow \mathbb{R}^n$ such that
\[
X \, \mapsto \, \nabla G(x)^*[X] \, \coloneqq \,
\bbm \tr{\nabla_{x_1} G(x) X }
& \cdots  & \tr{ \nabla_{x_n} G(x)  X } \ebm^T.
\]
Here, $\tr{A}$ stands for the trace of the matrix $A$.

Suppose $u$ is a minimizer of \reff{nsdp}.
Under the nondegeneracy condition (NDC) (see Section \ref{sec:cq} for details),
there exists a symmetric matrix
$\Lambda \in \mathcal{S}^m$ satisfying
\begin{equation}\label{kkt}
%%\tag{FOOC}
\left\{
\begin{array}{rcl}
\nabla f(u)-\nabla G(u)^*[\Lambda] &=& 0, \\
\Lambda \succeq 0, \, G(u) &\succeq& 0, \\
G(u) \Lambda   &=& 0.
\end{array}  \right.
\end{equation}
These are referred to as the KKT conditions or the first order optimality conditions (FOOC).
A point $u\in \mR^n$ satisfying \reff{kkt} is called a KKT point,  the matrix  $\Lambda$ is called the associated Lagrange multiplier matrix, and $(u,\Lambda)$ is called a KKT pair.
When \reff{kkt} holds, the problem \reff{nsdp} is equivalent to
\begin{equation} \label{int:nsdp:eq}
\left\{ \baray{cl}
\min\limits_{x\in \mR^n, \Lambda \in\mathcal{S}^{m}} & f(x)  \\
\st &   \nabla f(x)-\nabla G(x)^*[\Lambda] = 0, \\
&  G(x)\Lambda =0,~G(x)\succeq 0,~\Lambda \succeq 0.
\earay \right.
\end{equation}
The reformulation \reff{int:nsdp:eq} introduces a new matrix variable, the Lagrange multiplier matrix $\Lambda$, which has $\binom{m+1}{2}$ entries. The usage of $\Lambda$ in \reff{int:nsdp:eq}
 substantially increases the computational cost of the resulting Moment-SOS hierarchy.

In this paper, we express the Lagrange multiplier matrix $\Lambda$
as a symmetric matrix polynomial $\Theta(x)$.
The problem \reff{nsdp} is then equivalent to
\begin{equation} \label{int:nsdp}
\left\{ \baray{rl}
\min & f(x)  \\
\st &   \nabla f(x)-\nabla G(x)^*[\Theta(x)] = 0, \\
&  G(x)\Theta(x)=0, \, G(x)\succeq 0,\, \Theta(x)\succeq 0.
\earay \right.
\end{equation}
This reformulation has the same variable $x$ as in \reff{nsdp},
by using the expression $\Lambda = \Theta(x)$.
The problem \reff{int:nsdp} can be viewed as a matrix analogue of
the strengthened reformulation for polynomial optimization in \cite{Tight18}.
We refer to Section~\ref{sca:mat} for the major differences
between scalar constraints and matrix constraints.

By incorporating the KKT conditions and the expressions
%expression 
of the Lagrange multiplier matrices,
%matrix
  we propose a strengthened matrix Moment-SOS hierarchy
for solving \reff{nsdp}.
Interestingly, this strengthened hierarchy has finite convergence
under some general assumptions. Our major contributions are:
\bit
\item [i)] When the constraint matrix $G(x)$ is
nondegenerate (see \reff{G(x):nondgn}),
we show that the Lagrange multiplier matrix $\Lambda$
can be expressed as a matrix  polynomial $\Theta(x)$.
We further provide a numerical algorithm for computing $\Theta(x)$ efficiently.

\item [ii)]
The matrix Moment-SOS hierarchy is applied to solve the reformulation \reff{int:nsdp}, and we refer to this as the strengthened hierarchy. Under some general assumptions,
we prove that the strengthened hierarchy is tight, i.e., it has finite convergence.
In particular, we do not require the feasible set to be compact, nor assume that the strict complementarity condition or the second order optimality condition holds at every global minimizer.

\item [iii)] We show that every minimizer of the moment relaxation  has a flat truncation
when the relaxation order is sufficiently large, under some mild conditions.

\item [iv)]
We compare the performance of the strengthened matrix Moment-SOS hierarchy
based on \reff{int:nsdp} with that of the classical matrix Moment-SOS hierarchy based on \reff{nsdp}.
The numerical experiments demonstrate that the strengthened hierarchy  converges significantly faster.
\eit

This paper is organized as follows. Section~\ref{sc:pre} reviews some basics
of nonlinear semidefinite optimization and polynomial optimization.
Section~\ref{sec:exp} studies polynomial expressions for Lagrange multiplier matrices.
Section~\ref{sec:exs} presents the strengthened matrix Moment-SOS hierarchy
and summarizes the tightness results.
Section~\ref{sc:fct} gives the proof for the tightness of the strengthened hierarchy.
Section \ref{sc:detect} discusses how to detect tightness and extract minimizers.
Section~\ref{sec:num} presents some  numerical examples.
We draw conclusions in Section~\ref{sc:dis}.

\section{Preliminaries}
\label{sc:pre}

%\subsection*{Notation}\label{sec:notation}
\noindent {\bf Notation}
The symbol $\mathbb{N}$ (resp., $\mathbb{R}$, $\cpx$) denotes the set of
nonnegative integers (resp., real numbers, complex numbers).
Let $\mathbb{R}[x]:=\mathbb{R}\left[x_1, \ldots, x_n\right]$ (resp., $\mathbb{C}[x]$)
be the ring of polynomials in $x:=\left(x_1, \ldots, x_n\right)$
with real coefficients (resp., complex coefficients), and let $\mathbb{R}[x]_d$
denote the set of polynomials in $\mathbb{R}[x]$ with degrees at most $d$.
For a positive integer $m$,  denote by  $\mathcal{S}^{m}$
(resp., $\mathcal{S}_{+}^m$, $\mathcal{S}_{\mC}^{m}$)
the set of all $m$-by-$m$  real symmetric matrices
(resp., positive semidefinite matrices, complex symmetric matrices).
For $\af = (\af_1, \ldots, \af_n) \in \N^n$ and an integer $d > 0$,
denote
\[
\Dt(d)  \, \coloneqq \,  \frac{d(d+1)}{2},
\quad |\af| \coloneqq \af_1 + \cdots + \af_n,\quad
\mathbb{N}_d^n \, \coloneqq \,  \left\{\alpha \in \mathbb{N}^n \mid
|\af| \leq d\right\},
\]
and  denote the monomial vector
\[
[x]_{d}:=[\begin{array}{llllllll}
1 & x_1 & \cdots & x_n & x_1^2 & x_1 x_2 & \cdots & x_n^{d}
\end{array}]^T.
\]
Denote by $\operatorname{deg}(p)$ the total degree of the polynomial $p$.
For a  matrix polynomial $H$,  $\operatorname{deg}(H)$
is the maximum degree among all its entries.
The symbol $\lceil t\rceil$ denotes the smallest integer that
is greater than or equal to $t$ and the symbol $I_{m}$
stands for the $m$-by-$m$ identity matrix.
For a matrix $A$, $A^{T}$ denotes its transpose and
$\ker{A}$ denotes the kernel of $A$ (i.e., the null space).
The trace of $A$ is denoted as $\tr{A}$,
which equals the sum of  its diagonal entries.
The  vectorization of $A\in \mR^{m\times n}$, denoted by $\vect{A}$,
is an $mn$-dimensional column vector formed by stacking the columns of $A$ from left to right.
When $A$ is symmetric, the vectorization of its upper triangular entries
is denoted as $\uvec{A}$. For a smooth function $f(x)$,
its partial derivative with respect to $x_i$ is denoted as $f_{x_i}$.

\subsection{Optimality conditions for nonlinear semidefinite optimization}
\label{sec:cq}
This subsection reviews the nondegeneracy condition and the first order optimality conditions for nonlinear semidefinite optimization. We refer to \cite{shap,shuni,sunde} for more details.

Consider the optimization problem of the form \reff{nsdp},
where  $f(x)$ and each entry of $G(x)$ are general smooth functions.
Let $u$ be a feasible point of \reff{nsdp}.
The (Bouligand) tangent cone to  $\mathcal{S}_{+}^m$ at $G(u)$ is the set
\[
T_{\mathcal{S}_{+}^m}(G(u))  = \left\{N \in \mathcal{S}^m: E^T N E \succeq 0\right\},
\]
where $E$ is a matrix whose column vectors form a basis of $\ker\,G(u)$.
The lineality space at $u$ is the largest subspace contained in
$T_{\mathcal{S}_{+}^m}(G(u))$, which is
\[
\operatorname{lin}\left(T_{\mathcal{S}_{+}^m}(G(u))\right) \, =  \,
\left\{N \in \mathcal{S}^m: \, E^T N E=0\right\}.
\]
The {\it nondegeneracy condition} (NDC)
is said to hold at $u$ if
\begin{equation}\label{CQ}
%%\tag{NDC}
\operatorname{Im} \nabla G(u) +
\operatorname{lin} \Big( T_{\mathcal{S}_{+}^m}(G(u)) \Big)
=	\mathcal{S}^{m} .
\end{equation}
In the above, $\operatorname{Im} \nabla G(u)$
denotes the image of the mapping $\nabla G(u)$.
The NDC can be viewed as an analogue of the classical
linear independence constraint qualification condition
in nonlinear programming, as it implies the uniqueness of the  Lagrange multiplier matrix.
When $u$ is a local minimizer of \reff{nsdp} and the NDC holds at $u$,
there exists a unique matrix $\Lambda\in \mc{S}^m$ satisfying
\begin{equation}\label{kkteq}
\begin{array}{rcl}
\nabla f(u) -\nabla G(u)^*[\Lambda] &=& 0, \\
\Lambda \succeq 0, \, G(u) \Lambda   &=& 0.
\end{array}
\end{equation}

In the following, we give an equivalent description for the NDC,
which is useful for studying the existence of
Lagrange multiplier matrix expressions in Section \ref{sec:exp}.
For convenience, denote the null space
\[
\ker_{\mc{S}^m} G(u) \coloneqq \{ Y \in \mc{S}^m:  G(u)Y = 0 \}.
\]
By taking orthogonal complements, the condition \reff{CQ} is equivalent to
\begin{equation} \label{Range:NDC}
\operatorname{Im} \nabla G(u)^{\perp}  \cap
\Big( \operatorname{lin} \big( T_{\mathcal{S}_{+}^m}(G(u)) \big) \Big)^\perp
=	\{ 0 \}.
\end{equation}
Note that $\operatorname{Im} \nabla G(u)^{\perp} = \ker \nabla G(u)^*$.
One can also see that
\[
\Big( \operatorname{lin} \big( T_{\mathcal{S}_{+}^m}(G(u)) \big) \Big)^\perp
= \ker_{\mc{S}^m} G(u).
\]
Therefore, we get the following lemma.

\begin{lem}  \label{lm:ndc:cap=0}
The NDC \reff{CQ} holds at $u$ if and only if
\begin{equation}\label{NDC:cap=0}
\ker \nabla G(u)^* \cap \ker_{ \mc{S}^m } G(u) = \{ 0 \}.
\end{equation}
\end{lem}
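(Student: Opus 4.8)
The plan is to verify the two equivalences that were asserted just before the lemma statement, namely that taking orthogonal complements turns \reff{CQ} into \reff{Range:NDC}, and then to identify the two complemented subspaces with $\ker \nabla G(u)^*$ and $\ker_{\mc{S}^m} G(u)$ respectively. First I would recall the elementary fact that in a finite-dimensional inner product space, $(V+W)^\perp = V^\perp \cap W^\perp$ for any subspaces $V,W$; applied in $\mc{S}^m$ with the trace inner product $\langle A,B\rangle = \tr{AB}$, the condition $\operatorname{Im}\nabla G(u) + \operatorname{lin}(T_{\mc{S}^m_+}(G(u))) = \mc{S}^m$ is equivalent to $(\operatorname{Im}\nabla G(u))^\perp \cap (\operatorname{lin}(T_{\mc{S}^m_+}(G(u))))^\perp = \{0\}$, which is \reff{Range:NDC}.

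Next I would compute each of the two orthogonal complements explicitly. For the first, the adjoint relation $\langle \nabla G(u)[v], X\rangle = \langle v, \nabla G(u)^*[X]\rangle$ (which holds by the definition of $\nabla G(u)^*$ given in the introduction) shows that $X \perp \operatorname{Im}\nabla G(u)$ iff $\langle v,\nabla G(u)^*[X]\rangle = 0$ for all $v\in\mr^n$, i.e., iff $\nabla G(u)^*[X] = 0$; hence $(\operatorname{Im}\nabla G(u))^\perp = \ker\nabla G(u)^*$. For the second, write $E$ for a matrix whose columns form a basis of $\ker G(u)$, so that $\operatorname{lin}(T_{\mc{S}^m_+}(G(u))) = \{N\in\mc{S}^m : E^T N E = 0\}$. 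I would diagonalize $G(u) = Q\,\diag(D,0)\,Q^T$ with $D\succ 0$, take $E$ to be the columns of $Q$ spanning the zero block, and in this basis describe both $\{N : E^T N E = 0\}$ and $\ker_{\mc{S}^m} G(u) = \{Y : G(u)Y = 0\}$ as block matrices; a direct block computation shows that the matrices $N$ with vanishing lower-right block and the symmetric matrices $Y$ supported only in that block are orthogonal complements of each other in $\mc{S}^m$ under the trace inner product. This yields $(\operatorname{lin}(T_{\mc{S}^m_+}(G(u))))^\perp = \ker_{\mc{S}^m} G(u)$.

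Combining these two identifications with \reff{Range:NDC} gives exactly \reff{NDC:cap=0}, completing the proof. The main obstacle — really the only nontrivial point — is the block computation showing $(\{N : E^T N E = 0\})^\perp = \ker_{\mc{S}^m} G(u)$ inside $\mc{S}^m$; one must be careful that the inner product is taken over \emph{symmetric} matrices (so dimensions are $\binom{m+1}{2}$, not $m^2$) and that $\ker_{\mc{S}^m} G(u)$, defined by the one-sided condition $G(u)Y = 0$, does consist of symmetric matrices whose support lies in the kernel block — this uses that $G(u)$ and $Y$ commute when $Y$ is symmetric with range inside $\ker G(u)$. Everything else is the standard orthogonal-complement bookkeeping already sketched in the paragraph preceding the lemma, so the lemma is essentially a clean restatement of that discussion.
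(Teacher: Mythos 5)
Your proposal is correct and follows essentially the same route as the paper, which proves the lemma in the paragraph preceding it by taking orthogonal complements in $\mathcal{S}^m$, identifying $(\operatorname{Im}\nabla G(u))^{\perp}=\ker\nabla G(u)^*$, and identifying $\bigl(\operatorname{lin}(T_{\mathcal{S}_+^m}(G(u)))\bigr)^{\perp}=\ker_{\mathcal{S}^m}G(u)$. Your block computation in an orthogonal eigenbasis of $G(u)$ just fills in the details of the last identification, which the paper leaves implicit.
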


A similar notion of nondegeneracy to that in  \reff{NDC:cap=0}
is given in  \cite[Definition~3.3.1]{SDPbook}.
We remark that the condition \reff{NDC:cap=0}
remains well-defined even when $u$ is not a feasible point of \reff{nsdp}.

\subsection{Some basics in real algebraic geometry}
\label{sec:qm}
This subsection reviews some basics in polynomial optimization.
We refer to \cite{HDLJ,Lau09,niebook,Sch09} for more detailed introductions.

A subset $I  \subseteq \mathbb{R}[x]$ is called an ideal of $\re[x]$
if $I \cdot \mathbb{R}[x] \subseteq I$, $I+I \subseteq I$.
Its complex variety is the set of complex common zeros of polynomials in $I$, i.e.,
\begin{equation*}
V_{\cpx}(I)=\{x \in \cpx^n \mid p(x)=0 ~ \forall p\in I\}.
\end{equation*}
Its real variety is the set $V_{\re}(I) = V_{\mC}(I) \cap \re^n$.
The ideal generated by a polynomial matrix  $H\in \mR[x]^{m_1\times m_2}$
is defined to be the ideal generated by all its entries, i.e.,
\begin{equation*}
\ideal{H} \,= \, \sum\limits_{i=1}^{m_1}\sum\limits_{j=1}^{m_2}H_{ij}(x)\cdot \mR[x].
\end{equation*}
For a polynomial matrix  tuple $\mathcal{H}  \coloneqq (H_1,\dots, H_q)$, we define
\begin{equation*}
\ideal{\mathcal{H}} \,= \, \ideal{H_1}+\cdots+\ideal{H_q}.
\end{equation*}
For a degree $k$, we define the truncations:
\begin{eqnarray}
\ideal{H}_k & = & \sum\limits_{i=1}^{m_1}\sum\limits_{j=1}^{m_2}
H_{ij}(x)\cdot \mR[x]_{k-\deg(H_{ij})}, \\
\ideal{\mathcal{H}}_k & = & \ideal{H_1}_k+\cdots+\ideal{H_q}_k.
\end{eqnarray}

A polynomial $p$ is said to be a sum of squares (SOS) if
$
p=p_1^2+\dots+p_t^2$ for $p_1,\dots,p_t \in \mathbb{R}[x]$.
The set of all SOS polynomials in $x$ is denoted as $\Sigma[x]$.
For a  degree $k$, denote the truncation
\[
\Sigma[x]_{k} \, \coloneqq  \, \Sigma[x] \cap  \mathbb{R}[x]_{k} .
\]
We refer to \cite{HN08,niebook} for matrix SOS polynomials.
The quadratic module of a symmetric matrix polynomial $G \in \re[x]^{m\times m}$ is the set
\begin{equation}\nonumber
\qmod{G}  \coloneqq  \Big \{
\sigma+\sum_{t=1}^r v_t^TGv_t \mid
\sigma \in \Sigma[x],~v_t \in \mathbb{R}[x]^m,~r\in \N
\Big \}.
\end{equation}
For a symmetric matrix polynomial tuple $\mathcal{G}:=(G_1,\dots,G_{s})$, we define
\[
\qmod{\mathcal{G}} \,  \coloneqq  \,  \qmod{G_1}+ \cdots+ \qmod{G_s}.
\]
For a degree $k$, we similarly define the truncations:
\be
\qmod{G}_{k}  \coloneqq   \left\{\baray{l|l}
\sigma+\sum\limits_{t=1}^{r} v_t^T G v_t& \baray{l}  \sigma \in \Sigma[x],~
v_t \in \mathbb{R}[x]^m,~r\in \N,\\
\deg(\sigma)\leq k,~\deg(v_t^T G v_t)\leq k
\earay
\earay \right\},
\ee
\be
 \qmod{\mathcal{G}}_{k}   \coloneqq   \qmod{G_1}_{k} +\cdots+ \qmod{G_s}_{k}.
\ee

The set $\ideal{\mathcal{H}}+\qmod{\mathcal{G}}$ is said to be Archimedean,
if  there exists $R >0$ such that $R-\|x\|^2 \in \ideal{\mathcal{H}}+\qmod{\mathcal{G}}$.
Clearly, for $p \in \ideal{\mathcal{H}}+\qmod{\mathcal{G}}$, we have $p(u) \geq  0$
for all $u \in  T = \{x\in \mR^n: H(x)=0,~\mathcal{G}(x)\succeq 0\}$.
Conversely, if $p$ is strictly positive on $T$ and $\ideal{\mathcal{H}}+\qmod{\mathcal{G}}$ is Archimedean,
then it holds $p \in \ideal{\mathcal{H}}+\qmod{\mathcal{G}}$.
This is referred to as the matrix version of Putinar's Positivstellensatz \cite{schhol}.

A truncated multi-sequence  $y \in \re^{ \mathbb{N}_{2k}^n }$ induces the Riesz functional
$\langle \cdot, y\rangle$ acting on $\mathbb{R}[x]_{ d}$ as
\begin{equation} \label{Reiz:fun}
\langle \sum_{\alpha \in \mathbb{N}_{2k}^n} p_{\alpha} x^{\alpha}, y\rangle
\,  \coloneqq  \, \sum_{\alpha \in \mathbb{N}_{2k}^n} p_{\alpha} y_{\alpha}.
\end{equation}
For a polynomial $q\in \mR[x]_{2k}$,
the $k$th order {\em localizing matrix} of   $q$ for $y$
is the symmetric matrix $L_{q}^{(k)}[y]$ satisfying
\begin{equation}\label{locmat:gi}
L_{q}^{(k)}[y] \, = \, \langle q\cdot [x]_{t}[x]_{t}^T, y\rangle,
\end{equation}
where $t = k-\lceil \deg(q)/2 \rceil$,
and $\langle \cdot, y\rangle$ is applied entry-wise to the  matrix $q\cdot [x]_{t}[x]_{t}^T$.
When $q = 1$, the localizing matrix $L_{q}^{(k)}[y]$
becomes the $k$th order {\it moment matrix}
$
M_k[y]\,\coloneqq\, L_1^{(k)}[y].
$
For a  matrix polynomial $H = (H_{ij}) \in \mR[x]^{m_1 \times m_2}$,
the localizing  matrix of $H$ for
$y\in \mR^{ \N^n_{2k} }$ is the block matrix
\[
L_{H}^{(k)}[y] \, \coloneqq \,
( L_{ H_{ij} }^{(k)}[y] )_{1 \le i\leq m_1, 1\le j \le m_2 },
\]
where each $L_{ H_{ij} }^{(k)}[y]$ is defined as in \reff{locmat:gi}. 	
Note that $L_{H}^{(k)}[y]$ is symmetric when $H$ is symmetric.

\section{Lagrange multiplier matrix expressions}
\label{sec:exp}
The KKT equation for \reff{nsdp} reads as
\begin{equation}\label{kktset:x}
\left\{ \begin{array}{rcl}
\nabla f(x) - \nabla G(x)^*[\Lambda] &=& 0, \\
G(x) \Lambda    &=& 0,
\end{array} \right.
\end{equation}
where $\Lambda= (\Lambda_{ij}) \in \mc{S}^m$ is the Lagrange multiplier matrix.
Recall that
\[
\nabla G(x)^*[\Lambda]  =  \bbm \tr{ \nabla_{x_1} G(x) \Lambda }
& \cdots  & \tr{ \nabla_{x_n} G(x) \Lambda } \ebm^T.
\]
Then, it holds that
\[
\nabla G(x)^*[\Lambda] = \sum_{i, j=1}^m  \Lambda_{i j} \nabla G_{i j}(x)
= \sum_{i=1}^m  \Lambda_{ii} \nabla G_{ii}(x)  +
2 \sum_{1 \le i<j \le m}  \Lambda_{ij} \nabla G_{ij}(x) .
\]
This is equivalent to
\[
\nabla G(x)^*[\Lambda]   =  P_1(x)\cdot \uvec{\Lambda},
\]
where the matrix polynomial $P_1(x) \in \mathbb{R}[x]^{n\times  \Dt(m)}$
is
\begin{equation*}
P_1 = \bbm
\nabla G_{11}  & 2\nabla G_{12} &
\nabla G_{22} &
2\nabla G_{13} & \cdots & 2\nabla G_{1m}   & \cdots &2\nabla G_{m-1,m}  &    \nabla G_{mm}
\ebm,
\end{equation*}
$\Dt(m)  =  \frac{m(m+1)}{2}$, and $\uvec{\Lambda}$ is the vectorization of upper triangular entries of $\Lambda$ {in column-stacked order}, i.e.,
\[
\uvec{ \Lambda } = \bbm
\Lambda_{11} & \Lambda_{12}& \Lambda_{22} &  \Lambda_{13} & \cdots & \Lambda_{1m}   & \cdots & \Lambda_{m-1, m} & \Lambda_{mm}
\ebm^{T } .
\]
The matrix equation $G(x)\Lambda=0$ is equivalent to
$\vect{G(x)\Lambda} = 0$.
Let $P_2(x)\in \mR[x]^{m^2\times \Dt(m)}$ be the matrix polynomial such that
\[
\vect{G(x)\Lambda} = P_2(x) \cdot \uvec{\Lambda} .
\]

Denote the matrix polynomial
\begin{equation}\label{def:PP}
P(x)=\left[\begin{array}{l}
P_1(x) \\
P_2(x)
\end{array}\right] \in \mR[x]^{(n+m^2)\times \Dt(m)} .
\end{equation}
{Then,} we get the following lemma.

\begin{lem}
The KKT equation \reff{kktset:x} is equivalent to
\begin{equation}  \label{def:P}
P(x)\cdot \uvec{\Lambda}=\left[\begin{array}{c}
\nabla f(x) \\
0 \\
\end{array}\right],
\end{equation}
where $0$ denotes the zero vector of length $m^2$.
\end{lem}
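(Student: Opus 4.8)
The plan is to rewrite each of the two equations in \reff{kktset:x} using the matrix representations of the linear maps $\Lambda \mapsto \nabla G(x)^*[\Lambda]$ and $\Lambda \mapsto \vect{G(x)\Lambda}$ constructed in the discussion preceding the statement, and then to stack the resulting identities.

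First I would note that, for fixed $x$, the assignment $\mc{S}^m \ni \Lambda \mapsto \nabla G(x)^*[\Lambda] \in \re^n$ is linear. Expanding $\nabla G(x)^*[\Lambda] = \sum_{i,j=1}^m \Lambda_{ij}\nabla G_{ij}(x)$ and grouping the diagonal terms against the off-diagonal pairs (using the symmetry of both $\Lambda$ and $G(x)$, which produces the factor $2$ on each $\nabla G_{ij}$ with $i<j$) shows that this map is exactly left multiplication of the column vector $\uvec{\Lambda}$ by the matrix polynomial $P_1(x)$, whose columns are $\nabla G_{11}, 2\nabla G_{12}, \dots, \nabla G_{mm}$ in the order matching the entries of $\uvec{\Lambda}$. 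Hence the first equation of \reff{kktset:x}, namely $\nabla f(x) - \nabla G(x)^*[\Lambda] = 0$, is equivalent to $P_1(x)\cdot\uvec{\Lambda} = \nabla f(x)$.

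Next I would treat the equation $G(x)\Lambda = 0$. Since $\Lambda \mapsto G(x)\Lambda$ is linear in the entries of $\Lambda$, so is its vectorization, and hence $\vect{G(x)\Lambda} = P_2(x)\cdot\uvec{\Lambda}$ for the matrix polynomial $P_2(x) \in \mR[x]^{m^2\times \Dt(m)}$ introduced in the excerpt. Because $G(x)\Lambda = 0$ holds if and only if $\vect{G(x)\Lambda} = 0$, the second equation of \reff{kktset:x} is equivalent to $P_2(x)\cdot\uvec{\Lambda} = 0$, where $0$ denotes the zero vector of length $m^2$.

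Finally I would combine the two equivalences: the system \reff{kktset:x} is satisfied if and only if both $P_1(x)\cdot\uvec{\Lambda} = \nabla f(x)$ and $P_2(x)\cdot\uvec{\Lambda} = 0$ hold simultaneously, which, by the definition \reff{def:PP} of the stacked matrix $P(x)$, is precisely \reff{def:P}. I do not expect any genuine obstacle here; the only point requiring care is the bookkeeping — making sure the column orderings of $P_1(x)$ and $P_2(x)$ agree with the chosen ordering of the entries of $\uvec{\Lambda}$, and that the coefficient $2$ in the off-diagonal columns of $P_1(x)$ correctly absorbs the contributions of both $\Lambda_{ij}$ and $\Lambda_{ji}$.
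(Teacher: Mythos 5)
Your proposal is correct and follows essentially the same route as the paper: the paper's own argument is just the construction of $P_1(x)$ and $P_2(x)$ (with the factor $2$ on the off-diagonal gradients coming from the symmetry of $\Lambda$ and $G$), after which the lemma is the stacked statement of $\nabla G(x)^*[\Lambda]=P_1(x)\,\uvec{\Lambda}$ and $\vect{G(x)\Lambda}=P_2(x)\,\uvec{\Lambda}$. Your bookkeeping of the column ordering and the length-$m^2$ zero block matches the paper's construction, so there is nothing to add.
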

\vspace{.2cm}

When $P(x)$  has full column rank, we can get the rational expression
\[
\uvec{\Lambda}= \big( P(x)^TP(x) \big)^{-1}P(x)\left[\begin{array}{c}
\nabla f(x) \\
0 \\
\end{array}\right].
\]
 However, this rational expression is often expensive to use in practice, since its denominator is typically a high-degree polynomial. Under the nonsingularity of $P(x)$, we can  obtain a polynomial expression.
The matrix polynomial $P(x)$ is said to be {\it nonsingular} if $\rank\, P(x) = \Dt(m)$
for every complex vector $x\in \mathbb{C}^n$;
that is, $P(x)$ has full column rank everywhere in $\mathbb{C}^n.$
When $P(x)$ is nonsingular, it follows from \cite[Proposition 5.2]{Tight18}
that there exists a matrix polynomial
$L(x)\in \mR[x]^{\Dt(m)\times (n+m^2)}$ such that
\begin{equation}\label{l:iden}
L(x)P(x)   =   I_{\Dt(m)}.
\end{equation}
The above  implies
\begin{equation}\label{eqlme}
\uvec{ \Lambda }= L(x)P(x)\cdot \uvec{ \Lambda } = L(x)\left[\begin{array}{c}
\nabla f(x) \\
0 \\
\end{array}\right].
\end{equation}
Let $\Theta(x)$ be the $m$-by-$m$ symmetric matrix polynomial such that
\begin{equation} \label{Theta(x)}
\uvec{ \Theta(x) }  =  L(x) \bbm \nabla f(x) \\ 0 \ebm .
\end{equation}
Therefore, for every pair $(x, \Lambda)$ satisfying \reff{kktset:x},
we have $\Lambda = \Theta(x)$. The matrix $\Theta(x)$
is called a {\it Lagrange multiplier matrix expression}.

The nonsingularity of $P(x)$ is closely related to the nondegeneracy condition \reff{CQ}.
By Lemma~\ref{lm:ndc:cap=0}, the NDC holds at a feasible point $u$ of \reff{nsdp}
if and only if the condition \reff{NDC:cap=0} holds at $u$. Note that \reff{NDC:cap=0}
remains well-defined even when  $u$ is not a feasible point of \reff{nsdp},
or when $u \in \cpx^n$ is a complex vector.
Therefore, we say the matrix polynomial $G(x)$ is {\it nondegenerate} if
\begin{equation}\label{G(x):nondgn}
\ker \nabla G(x)^* \,  \cap \, \ker_{ \mc{S}_{\cpx}^m } G(x) \ = \   \{ 0 \}
\quad \forall \, \, x \in \cpx^n.
\end{equation}
Here,
\[
\ker_{ \mc{S}_{\cpx} ^m } G(x)  = \{ Y \in \mc{S}_{\cpx}^m: G(x) Y = 0 \} .
\]
\begin{theorem}  \label{nonsing}
Let $P(x)$ be the matrix polynomial as in \reff{def:PP}.
Then $G(x)$ is nondegenerate if and only if $P(x)$ is nonsingular, i.e.,
$P(x)$ has full column rank for every complex vector $x \in \cpx^n$.
\end{theorem}
\begin{proof}
By the construction of $P(x)$ , one can see that
\[
\nabla G(x)^*[\Lambda] = 0, \quad  G(x) \Lambda  = 0,
\]
if and only if $P(x) \cdot \uvec{\Lambda} = 0$.
Therefore, the nondegeneracy condition \reff{G(x):nondgn}
holds if and only if $P(x)$ is nonsingular.
\end{proof}

In \reff{Theta(x)}, the matrix polynomial $\Theta(x)$ can be obtained by computing
$L(x)$ as in \reff{l:iden}.  We remark that  $L(x)$
can be determined by solving a linear system.
Let $v_i(x)^T$ denote the $i$th row of $L(x)$.
Then $L(x)P(x)=I_{\Delta(m)}$ is equivalent to
\begin{equation}\label{numeq}
v_i(x)^TP(x)=e_i^T,~~i=1,\dots,\Delta(m),
\end{equation}
where $e_i$ is the $i$th canonical basis vector in $\mR^{\Delta(m)}$.
Let $v^{(i)}$ denote the vector of concatenating coefficient vectors of all entries of $v_i(x)$.
For a priori degree $\ell$ of $L(x)$, \reff{numeq} is essentially
a linear system in the entries of $v^{(i)}$.
When $P(x)$ is nonsingular, the linear system \reff{numeq}
is solvable for sufficiently large  $\ell$,  as shown by the above analysis.
When it is solvable, \reff{numeq} may not have a unique solution.
In computational practice, we consider its minimum $2$-norm solution.
This is equivalent to the optimization problem ($i=1,\dots,\Delta(m)$)
\begin{equation}
\label{VL}
\left\{
\begin{array}{cl}
\min &  \|v^{(i)}\|_{2} \\
\st  &  v_i(x)^T P(x)  =  e_i^T  ,
\end{array}
\right.
\end{equation}
where $\|v^{(i)}\|_{2}$ denotes the $2$-norm of $v^{(i)}$.
The optimal solution of \reff{VL} can be explicitly given by the pseudoinverse of the coefficient matrix,
since it is a linear least square problem \cite{Demmel}.
Typically, we choose a priori degree $\ell$.
If \reff{VL} is infeasible for this $\ell$, we increase $\ell$ and solve it again.
We remark that, since our goal is to find a feasible $L(x)$ for \reff{numeq},
the choice of norm in \reff{VL} is not crucial, and in principle any norm would suffice.
We use the $\ell^2$-norm primarily for convenience, as \reff{VL} then becomes a linear least squares problem.
While using the $\ell^1$-norm may potentially help get sparse LMEs, it turns \reff{VL} into a linear program, which is generally more computationally expensive than linear least squares. Exploring sparse LMEs for sparse matrix polynomial optimization would be an interesting direction for future work.

For many cases, expressions for $L(x)$ and $\Theta(x)$
can be given explicitly.

\begin{example}\label{exmLME}
(i) Consider the linear matrix inequality
\[
G(x)=	\left[\begin{array}{cccc}
a_{11}^Tx+b_{11}&  a_{12}^Tx+b_{12}&\cdots&a_{1m}^Tx+b_{1m}  \\
a_{12}^Tx+b_{12}&a_{22}^Tx+b_{22} &\cdots& a_{2m}^Tx+b_{2m}  \\
\vdots&\vdots&\ddots &\vdots\\
a_{1m}^Tx+b_{1m}& a_{2m}^Tx+b_{2m}&\cdots& a_{mm}^Tx+b_{mm}  \\
\end{array}\right]\succeq 0,
\]
where the vectors $a_{ij} \in \mR^n$ $(1\leq i\leq j\leq m)$
are linearly independent and $b_{ij}$ are scalars.
The matrix $P_1$ reads as
\[
P_1=\begin{bmatrix}
a_{11} & 2a_{12} & a_{22} & 2a_{13} & \cdots & 2a_{1m}  & \cdots & 2a_{m-1,m}&a_{mm}
\end{bmatrix},
\]
which  is a real constant matrix of full column rank.
Hence, $P_1^TP_1$ is invertible and  the equation $L(x)P(x)=I_{\Dt(m)}$ is satisfied for
\[
L(x)=\bbm (P_1^TP_1)^{-1}P_1^T & \quad 0 \ebm.
\]
Then, we  obtain the Lagrange multiplier matrix expression
$\Theta(x)$, which satisfies
\[
\uvec{\Theta(x)} \, =  \, (P_1^TP_1)^{-1}P_1^T  \nabla f(x).
\]

\item[(ii)]	Consider the polynomial matrix inequality
\[
G(x)=\begin{bmatrix}
x_1^2-2 & \frac{1}{2}x_1x_2 \\
\frac{1}{2}x_1x_2 & x_2^2-2 \\
\end{bmatrix}\succeq 0.
\]
We have
\[
P_1(x)=\begin{bmatrix}
2x_1 & 0\\
x_2 & x_1\\
0 & 2x_2
\end{bmatrix}^T,\quad
P_2(x)=\begin{bmatrix}
x_1^2-2 & \frac{1}{2}x_1x_2 & 0 & 0\\
\frac{1}{2}x_1x_2 & x_2^2-2 & x_1^2-2 & \frac{1}{2}x_1x_2\\
0 & 0& \frac{1}{2}x_1x_2 & x_2^2-2
\end{bmatrix}^T.
\]
The equation $L(x)P(x)=I_{3}$ is satisfied for
\[
L(x)=\begin{bmatrix}
\frac{1}{4}x_1 & 0 & -\frac{1}{2}
& 0 & 0 & 0\\
\frac{1}{4}x_2-\frac{3}{32}x_1^2x_2 & \frac{1}{4}x_1-\frac{3}{32}x_1x_2^2 & \frac{3}{16} x_1x_2 & -\frac{1}{4} & -\frac{1}{4} & \frac{3}{16}x_1x_2 \\
0 & \frac{1}{4}x_2 & 0 & 0&0&-\frac{1}{2}\end{bmatrix}.
\]
Thus, we get the Lagrange multiplier matrix expression of $\Theta(x)$ as
{
\small
\[
\begin{bmatrix}
\frac{1}{4}x_1 f_{x_1} & (\frac{1}{4}x_2-\frac{3}{32}x_1^2x_2)f_{x_1}+(\frac{1}{4}x_1-\frac{3}{32}x_1x_2^2)f_{x_2}\\
(\frac{1}{4}x_2-\frac{3}{32}x_1^2x_2)f_{x_1}+(\frac{1}{4}x_1-\frac{3}{32}x_1x_2^2)f_{x_2} & \frac{1}{4}x_2f_{x_2}
\end{bmatrix}.
\]
}

\item[(iii)]  Consider the polynomial matrix inequality
\[
G(x)=\left[\begin{array}{ll}
x_1 & x_1x_2-1\\
x_1x_2-1& x_2x_3-1\\
\end{array}\right]\succeq 0.
\]
We have
\[
P_1(x)=\begin{bmatrix}
1 & 0&0\\
2x_2 & 2x_1&0\\
0 & x_3 &x_2\\
\end{bmatrix}^T,\,
P_2(x)=\begin{bmatrix}
x_1 & x_1x_2-1 & 0 & 0\\
x_1x_2-1 & x_2x_3-1 &  x_1 &x_1x_2-1\\
0 & 0 & x_1x_2-1 & x_2x_3-1
\end{bmatrix}^T.
\]
The equation $L(x)P(x)=I_{3}$ is satisfied for
\[
L(x)=\begin{bmatrix}
1-x_1x_2 & 0 & 1-x_2x_3&x_2&0&0&x_2 \\
x_1 & -\frac{1}{2}x_2 & \frac{1}{2}x_3 &-1&0&0&0 \\
-x_1 & x_2 & 0& 1&0& 0 &-1 \\
\end{bmatrix}.
\]
Then, we obtain the Lagrange multiplier matrix expression
\[
\Theta(x)=	
\begin{bmatrix}
(1-x_1x_2)f_{x_1}+(1-x_2x_3)f_{x_3}	&  x_1f_{x_1} -\frac{1}{2}x_2f_{x_2}-\frac{1}{2}x_3f_{x_3}\\
x_1f_{x_1} -\frac{1}{2}x_2f_{x_2}-\frac{1}{2}x_3f_{x_3}	& x_2f_{x_2}-x_1f_{x_1} \\
\end{bmatrix}.
\]
\end{example}	

\hspace{.1cm}

\begin{example}
\label{exmrandom}
We explore the minimum degree $\ell$ of the matrix polynomial $L(x)$
satisfying \reff{l:iden}. %%$L(x)P(x)=I_{\Delta(m)}$.
Consider the matrix polynomial
\[
G(x) =
\sum\limits_{\alpha\in \mathbb{N}^n_d} G_{\alpha}x^{\alpha},
\]
where the coefficient matrix $G_{\alpha}\in\mathcal{S}^{m}$
is generated randomly (its upper triangular entries are randomly generated
by \texttt{randn} in MATLAB).
As shown in Example~\ref{exmLME} (i), when $n\ge \Delta(m)$ and $d=1$,
the matrix $L(x)$ can be chosen to be constant, i.e., it has degree zero.
For general cases, it is mostly an open question for determining the minimal degree  of  $L(x)$.
Below, we explore the minimum degree $\ell$ by numerically solving \reff{VL},
for several typical values of $(m,n,d)$.
In Table~\ref{table:degG}, we list all minimum degrees $\ell$ of $L(x)$
satisfying \reff{l:iden} for $20$ randomly generated instances.

\begin{table}[htbp]
\centering
\caption{The minimum degree $\ell$ of $L(x)$ such that $L(x)P(x)=I_{\Delta(m)}$}
\label{table:degG}
\renewcommand{\arraystretch}{1.1}
\begin{tabular}{|c|c|c|c|c|c|} \hline
$(m,n,d)$   &    (3,2,1)   & (3,3,1)  &  (3,4,1)   & (3,5,1)  &  (4,5,1) \\ \hline
$\ell$   & 1, 2  &  1, 2 &  2, 3  &   1 &  3 \\ \hline
$(m,n,d)$ &   (4,6,1)&(4,8,1)& (4,9,1) & (5,12,1)& (5,14,1) \\  \hline
$\ell$ &3&2&1&2&1  \\  \hline
$(m,n,d)$ &  (6,20,1) & (2,3,2)  & (2,4,2)&(2,6,2)&(3,2,2) \\ \hline
$\ell$ & 1, 2 &3, 4&4&4, 5&5, 6, 7  \\  \hline
\end{tabular}
\end{table}
\end{example}

\section{The strengthened matrix Moment-SOS hierarchy}
\label{sec:exs}

In this section, we give a strengthened Moment-SOS hierarchy for solving \reff{nsdp}
by using the Lagrange multiplier matrix expression.
For this purpose, we make the following assumption.
\begin{ass}\label{ass:multi}
There exists a symmetric $m$-by-$m$ matrix polynomial $\Theta(x)$ such that
$\Lambda=\Theta(x)$ for every KKT pair $(x,\Lambda)$ of \reff{nsdp}.
\end{ass}

Assumption~\ref{ass:multi} is generally satisfied.
In particular, if the matrix polynomial $P(x)$ as in \reff{def:P} is nonsingular,
then Assumption \ref{ass:multi} holds (see Section~\ref{sec:exp}). We remark, however,
that the nonsingularity of $P(x)$ is a sufficient
but not necessary condition for  Assumption \ref{ass:multi} to be satisfied.
For instance, consider the case that $m=n=1$ and the optimization problem
\begin{equation} \nonumber
\left\{ \baray{rl}
\min\limits_{x\in \mR^1} & f(x)  \\
\st &  	(x^2+1)^2(x-1)\geq 0. \\
\earay \right.
\end{equation}
One can verify that $P(x)$  in \reff{def:PP} for the above problem is not nonsingular
since $P(\sqrt{-1}) = 0$.
Nevertheless, the nondegeneracy condition \reff{CQ} holds for every $x\in \mR$, and \reff{kktset:x} reduces to
\[
\nabla f(x)=\Lambda\cdot  (4x(x^2+1)(x-1)+(x^2+1)^2),\quad  \Lambda\cdot (x^2+1)^2(x-1)=0.
\]
If $x=1$, then $\Lambda=\frac{1}{4}\nabla f(1)$; if $x\neq 1$, the second equation forces $\Lambda=0$, and hence $\nabla f(x)=0$. Therefore,  $\Theta(x)=\frac{1}{4}\nabla f(x)$ is a  polynomial
multiplier expression
satisfying Assumption~\ref{ass:multi}.

When the optimal value $f_{\min}$ is achievable at a KKT point,
the problem \reff{nsdp} is equivalent to
\begin{equation} \label{sec3:nsdp:eq}
\left\{ \baray{cl}
\min\limits_{x\in \mR^n, \Lambda \in\mathcal{S}^{m}} & f(x)  \\
\st &   \nabla f(x)-\nabla G(x)^*[\Lambda] =  0, \\
&   G(x)\Lambda =0, \,	G(x)\succeq 0,~\Lambda \succeq 0 .
\earay \right.
\end{equation}
Under Assumption \ref{ass:multi}, the problem \reff{nsdp} is also equivalent to
\begin{equation} \label{nsdp:equ}
\left\{ \baray{rl}
\min & f(x)  \\
\st &    \nabla f(x)-\nabla G(x)^*[\Theta(x)] = 0, \\
& G(x)\Theta(x)=0, ~	G(x)\succeq 0, ~   \Theta(x)\succeq 0 .
\earay \right.
\end{equation}
We apply the matrix Moment-SOS hierarchy to solve \reff{nsdp:equ}. Denote the degree
\[
d_{0}  \, \coloneqq \,  \max{\left\{\lceil  \deg(f)/2 \rceil, \lceil
\deg(G)/2 +\deg(\Theta)/2 \rceil\right\}}.
\]
For a degree $k\geq  d_0$, the $k$th order moment relaxation  of \reff{nsdp:equ} is
\begin{equation} \label{mom}
\left\{ \baray{rl}
\min & \langle f, y\rangle  \\
\st &L^{(k)}_{G \Theta}(y)=0,~L^{(k)}_{\nabla f-\nabla G^*[\Theta] }(y)=0,\\
&  L_{G}^{(k)}(y) \succeq 0,~L_{\Theta}^{(k)}(y) \succeq 0, \\
&  y_0 =1, ~M_k(y) \succeq 0, \, y \in \re^{ \N_{2k}^n }.
\earay \right.
\end{equation}
The dual optimization problem of \reff{mom} is the $k$th order SOS relaxation:
\begin{equation} \label{sos}
\left\{ \baray{rl}
\max & \gamma \\
\st &f-\gamma \in \ideal{G \Theta, \nabla f-\nabla G^*[\Theta] }_{2k}+\qmod{G,\Theta}_{2k}.
\earay \right.
\end{equation}
We refer to Section~\ref{sec:qm} for the notation
and refer to \cite{hdlb,niebook} for the  duality.
When $k = d_0, d_0+1, \ldots$, the sequence of relaxations \reff{mom}--\reff{sos}
is called a strengthened matrix Moment-SOS hierarchy for solving \reff{nsdp}.

\subsection{Tightness of the strengthened matrix Moment-SOS hierarchy}
Let $f_{k,\mathrm{mom}}$ and $f_{k,\mathrm{sos}}$ denote the optimal values of
\reff{mom} and \reff{sos} respectively.
By the weak duality, we have
\[
f_{k,\mathrm{sos}} \le f_{k,\mathrm{mom}}  \le f_{\min}.
\]
The hierarchy of \reff{mom}--\reff{sos} is said to have asymptotic convergence if
$f_{k,\mathrm{sos}} \rightarrow f_{\min }$ as $k \rightarrow \infty$.
It is said to be {\it tight} (or to have finite convergence)
if there exists an order $N>0$ such that $f_{k,\mathrm{sos}}=f_{k,\mathrm{mom}}=f_{\min}$ for all $k \geq N$.
Our main result is that the hierarchy of \reff{mom}--\reff{sos}
is tight, under some general assumptions.
\begin{theorem}  \label{thm:finite}
Suppose the optimal value $f_{\min}$ of \reff{nsdp} is achievable at a KKT point,
Assumption \ref{ass:multi} holds, and
$\ideal{G\Theta,~\nabla f-\nabla G^* [\Theta]}+\qmod{G,\Theta}$ is Archimedean.
Then, the strengthened matrix Moment-SOS hierarchy of \reff{mom}--\reff{sos}
is tight, i.e., $f_{k,\mathrm{sos}}=f_{k,\mathrm{mom}}=f_{\min}$ for all $k$ sufficiently large.
\end{theorem}

The set $\ideal{G\Theta,~\nabla f-\nabla G^*[\Theta]}+\qmod{G,\Theta}$
is Archimedean if $\qmod{G}$ is Archimedean,
since the latter is a subset of the former. Moreover, this set is
Archimedean if the set of all pairs $(x,\Lambda)$ satisfying \reff{kktset:x} is compact.
In fact, for generic polynomial optimization problems, the set of KKT points is finite; this property holds on a Zariski open subset of the space of input polynomials with given degrees \cite{niebook}.
We remark that the assumption that $f_{\min}$ is achievable at a KKT point
cannot be dropped, as the optimal value of \reff{nsdp} may not be achieved at any KKT point, or may not even be achieved at any feasible point.
The proof of Theorem~\ref{thm:finite} is postponed to  Section~\ref{sc:fct}.

\subsection{A comparison with scalar inequality constraints}
\label{sca:mat}
The hierarchy of \reff{mom}--\reff{sos} can be viewed as the matrix analogue
of the strengthened Moment-SOS hierarchy in \cite{Tight18},
which is used to solve scalar polynomial optimization.
There are substantial differences in the algebraic and geometric properties of the two hierarchies.

When $G = \diag(g_1, \ldots,g_m)$ is diagonal, the MPO \reff{nsdp} is equivalent to
\begin{equation} \label{sp0}
\left\{ \baray{rl}
\min & f(x)  \\
\st &  	g_1(x)\geq 0,\cdots, g_m(x)  \geq 0. \\
\earay \right.
\end{equation}
The linear independence constraint qualification condition (LICQC)
holds at a minimizer $u$ of \reff{sp0} if $\rank \, C(u) = m$, for the matrix polynomial
\[
C(x)  \ = \
\bbm
\bmx \nabla g_1(x) &   \cdots & \nabla g_m(x) \emx   \\
\diag(g_1(x), \ldots, g_m(x))
\ebm \in \re[x]^{(m+n) \times m }.
\]
When the LICQC holds at $u$,  there exists
$\lambda = (\lambda_1,\dots,\lambda_m)\geq 0$ such that
\begin{equation}\label{scakkt}
\bbm
\bmx \nabla g_1(u) &   \cdots & \nabla g_m(u) \emx   \\
\diag(g_1(u), \ldots, g_m(u))
\ebm
\lmd  \ =  \  \bbm \nabla f(u) \\ 0 \ebm.
\end{equation}
The constraining tuple $g = (g_1, \ldots, g_m)$ is nonsingular
if $\rank \, C(x) = m$ for all $x\in \cpx^n$.
When it is nonsingular, there exist polynomials $p_1(x)$, \ldots,  $p_m(x)$ such that
$\lmd_i = p_i(u)$ for every $(u, \lmd)$ satisfying \reff{scakkt} \cite{Tight18}.
Then, \reff{sp0} is equivalent to
\begin{equation} \label{sp01}
\left\{ \baray{rl}
\min & \quad f(x)  \\
\st & \quad   p_1(x)\nabla g_1(x)+ \cdots  + p_m(x) \nabla g_m(x) - \nabla f(x) = 0,\\
&  \quad 	0 \le g_i(x)  \perp  p_i(x)  \geq 0,\, i = 1,\ldots, m.
\earay \right.
\end{equation}
In the above,  the symbol $\perp$ means that the product is zero.
It was shown in \cite{Tight18} that the Moment-SOS hierarchy for  solving  \reff{sp01}
is tight under  genericity assumptions.

The hierarchy \reff{mom}--\reff{sos} and Theorem \ref{thm:finite}	
can be viewed as the matrix analogue of the results  in \cite{Tight18}.
However, there are significant differences between their properties:
\bit
\item One may reformulate the matrix inequality $G(x)\succeq 0$ equivalently by scalar inequalities
(e.g., by requiring all the principal minors to be nonnegative),
and then apply the strengthened Moment-SOS hierarchy to solve scalar reformulation.
However, scalarization  often introduces singularities, i.e.,
the LICQC fails. For instance,  consider
\[
G(x)=\left[\begin{array}{ll}
x_1  & ~x_2\\
x_2   &~x_3 \\
\end{array}\right].
\]
The constraint $G(x)\succeq  0$ is equivalent to the following
scalar inequalities:
\[
x_1\geq 0,~x_3\geq  0,~x_1x_3-x_2^2\geq 0.
\]	
However, the LICQC fails at $0$ for these scalar constraints, while the NDC holds.
%
%Since the LICQC holding at every feasible point is a basic requirement for
%$Q(x)$ to be nonsingular, this prevents us from finding an appropriate
%scalarization to strengthen the relaxations.
%We refer to \cite{hn24} for a more detailed discussion about scalarizations.
%		
We typically cannot get Lagrange multiplier  expressions through scalarization.
\item
The proof for the tightness of the hierarchy \reff{mom}--\reff{sos}
(see Theorem \ref{thm:finite}) heavily relies on the matrix structure.
The proof techniques in \cite{Tight18} do not generalize directly
to the matrix case. A major reason is that the matrix equation
$G(x) \Lambda = 0$ does not imply $G(x) = 0$ or $\Lambda = 0$.
To prove the tightness of the hierarchy \reff{mom}--\reff{sos},
we need a more dedicated trick for $G(x) \Lambda = 0$.
Moreover, the proofs in \cite{Tight18} rely on Stengle's Positivstellensatz
for basic closed semialgebraic sets \cite{gsten},
which is not applicable for the matrix case.
\eit
We also refer to \cite[Section 3.2]{hn24} for more comparisons.

\section{Proof of tightness}
\label{sc:fct}
In this section, we give the proof for the tightness of
the strengthened Moment-SOS hierarchy of relaxations \reff{mom}--\reff{sos},
as stated in Theorem \ref{thm:finite}. For convenience,
 denote the set of complex critical pairs for  \reff{nsdp} by
\begin{equation}\label{kktvar}
\mathcal{K}^{\wedge}  \coloneqq  \left\{
\begin{array}{l|l}
(x,\Lambda) \in \mathbb{C}^n\times \mathcal{S}_{\mC}^{m} &
\begin{array}{r}
\nabla f(x)-\nabla G(x)^*[\Lambda ] = 0, \\
G(x)\Lambda=0
\end{array}
\end{array}\right\}.
\end{equation}
We complete the proof of Theorem \ref{thm:finite} in three major steps:

\begin{description}[leftmargin=2.2em, labelsep=0.5em, align=left]

\item[{\rm Step 1.}]
We show that $f$ achieves a constant value on each irreducible subvariety of $\mathcal{K}^{\wedge}$. This is shown in Lemma~\ref{kktfini}.

\item[{\rm Step 2.}] We express the set $\mathcal{K}^{\wedge}$
as a union of several subvarieties, each of which either contains no real points or on which $f$ attains a real constant value. For each subvariety, we prove an SOS-type representation for $f-f_{\min}$, as shown in Lemma~\ref{idealcons}.

\item[{\rm Step 3.}]
We aggregate the SOS-type representations for $f-f_{\min}$
on individual subvarieties to obtain a single one for $f-f_{\min}$.

\end{description}

\subsection{Step 1: some properties of the critical set}

\begin{lem}\label{kktfini}                                 	
Let $W$ be an  irreducible subvariety of $\mathcal{K}^{\wedge}$.
Then, $f$ achieves a constant value on $W$.
\end{lem}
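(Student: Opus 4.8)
The plan is to show that $f$ is locally constant on the smooth locus of $W$ by exhibiting $\nabla f$ as a combination of gradients that annihilate tangent vectors. The key observation is that the defining equations of $\mathcal{K}^{\wedge}$ encode a Lagrangian-type structure: on $\mathcal{K}^{\wedge}$ we have $\nabla f(x) = \nabla G(x)^*[\Lambda]$ together with $G(x)\Lambda = 0$. I would first pick a smooth point $(x_0, \Lambda_0)$ of $W$ and a smooth curve $t \mapsto (x(t), \Lambda(t))$ in $W$ through it; then I want to show $\frac{d}{dt} f(x(t)) = 0$. Since $\nabla f(x) = \sum_i \Lambda_{ij}\nabla G_{ij}(x)$ on $W$, we get
\[
\frac{d}{dt} f(x(t)) \, = \, \langle \nabla f(x(t)), \dot x(t)\rangle \, = \, \tr{ \Lambda(t)\, \tfrac{d}{dt}\big( G(x(t)) \big) } - \tr{ \Lambda(t)\, \nabla^2 \text{-terms}},
\]
so the real task is to relate $\tr{\Lambda \, \frac{d}{dt} G(x(t))}$ to $\frac{d}{dt}\tr{G(x(t))\Lambda(t)}$, which vanishes because $G(x(t))\Lambda(t) \equiv 0$ along the curve. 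Expanding $\frac{d}{dt}\tr{G(x(t))\Lambda(t)} = \tr{\big(\frac{d}{dt}G(x(t))\big)\Lambda(t)} + \tr{G(x(t))\dot\Lambda(t)} = 0$ and using $G(x(t))\Lambda(t) = 0$ (hence also, via symmetry, $\tr{G(x(t))\dot\Lambda(t)}$ can be controlled) should give $\tr{\big(\frac{d}{dt}G(x(t))\big)\Lambda(t)} = 0$, which is exactly $\langle \nabla f(x(t)), \dot x(t)\rangle$.

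More carefully, I would organize it as follows. First, reduce to the smooth locus: $W$ is irreducible, so its smooth locus $W_{\mathrm{reg}}$ is connected (in the Zariski sense, hence in the classical topology for the purpose of a connectedness/analytic-continuation argument), and $f$ constant on a Zariski-dense subset of $W$ forces $f$ constant on all of $W$. Second, compute the derivative: for a tangent vector $(v, \Delta) \in T_{(x_0,\Lambda_0)} W$, differentiating $G(x)\Lambda = 0$ gives $\nabla G(x_0)[v]\cdot \Lambda_0 + G(x_0)\cdot \Delta = 0$; taking the trace (which is legitimate because $\tr{AB}=\tr{BA}$ and we can symmetrize) yields $\tr{ \big(\nabla G(x_0)[v]\big)\Lambda_0 } + \tr{ G(x_0)\Delta } = 0$. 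Since $G(x_0)\Lambda_0 = 0$ with both symmetric, $G(x_0)$ and $\Lambda_0$ have orthogonal ranges, so $\tr{G(x_0)\Delta}$ needs a separate argument — this is where I'd use that $\Delta$ is a tangent direction forcing $\Delta$ to respect $\ker G(x_0)$, or alternatively differentiate the symmetric relation more cleverly. Then $\langle \nabla f(x_0), v\rangle = \tr{ \nabla G(x_0)[v]\, \Lambda_0 } = -\tr{G(x_0)\Delta}$, and I must show the right side is zero.

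The main obstacle is handling the term $\tr{G(x_0)\Delta}$: it is not obviously zero from the equations alone, because $G(x(t))\Lambda(t)=0$ only constrains the product, not each factor. I expect the resolution to use the symmetric structure: since $G(x(t))$ is symmetric PSD-range and $\Lambda(t)$ symmetric with complementary range along the variety, one can write $G(x(t))\Lambda(t) = 0$ \emph{and} $\Lambda(t) G(x(t)) = 0$ (automatic by transposing), then differentiate the symmetrized product $G(x(t))\Lambda(t) + \Lambda(t)G(x(t)) = 0$, or better, differentiate $\tr{ G(x(t)) \Lambda(t) } \equiv 0$ \emph{twice} or combine with differentiating $\Lambda(t) G(x(t)) \Lambda(t)\equiv 0$ to isolate $\tr{ G(x_0) \Delta }$. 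Another route, likely the cleanest, is to use a parametrization where along $W$ one has a genuine Lagrangian variety and invoke that $\sum_i v_i \frac{\partial f}{\partial x_i}$ is a total derivative of a function that the equations force to be locally constant — essentially reproving the classical fact that $f$ is constant on connected components of the KKT variety. I would most likely mimic the scalar argument from \cite{nieopcd} or \cite{Tight18} adapted to the matrix trace pairing, and the key technical lemma will be the vanishing of $\tr{G(x_0)\Delta}$ on tangent vectors, proven by differentiating the full matrix identity $G(x(t))\Lambda(t) = 0$ and pairing appropriately with $\Lambda_0$ using $\Lambda_0 G(x_0) = 0$ to kill the unwanted cross term.
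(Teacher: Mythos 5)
You have correctly set up the reduction — path/connectedness of the irreducible variety, differentiating $f$ along curves in $W$, using the KKT equation to convert $\langle \nabla f(x(t)),\dot x(t)\rangle$ into $\tr{\dot G(x(t))\,\Lambda(t)}$, and from $\frac{d}{dt}\big(G(x(t))\Lambda(t)\big)=0$ reducing everything to the vanishing of $\tr{G(x(t))\dot\Lambda(t)}$ — and this matches the skeleton of the paper's argument. But the step you yourself flag as ``the main obstacle'' is exactly the mathematical content of the lemma, and none of the devices you propose actually closes it. Differentiating $G\Lambda=0$ only gives $\tr{\dot G\Lambda}=-\tr{G\dot\Lambda}$, which is circular (that common value is precisely what must be shown to vanish); symmetrizing $G\Lambda+\Lambda G=0$ and taking traces gives the same identity; pairing $\dot G\Lambda_0+G_0\dot\Lambda=0$ with $\Lambda_0$ (or differentiating $\Lambda G\Lambda=0$) only yields $\tr{\Lambda_0\dot G\Lambda_0}=0$, which is not the needed $\tr{\dot G\Lambda_0}=0$; and ``mimic the scalar argument'' does not transfer, because in the scalar case irreducibility forces each component into $\{g=0\}$ or $\{\lambda=0\}$, whereas $G\Lambda=0$ for matrices forces neither factor to vanish — the paper explicitly singles this out as the reason a new trick is needed.

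The paper's resolution uses strictly more than the first-order tangency data you work with: it stratifies the parameter interval by $\rank\, G(x(t))$ and, on each stratum where the rank is locally constant equal to $\ell$, performs a block congruence $Q^T G Q = \diag(G_1, S)$ with $G_1$ invertible of size $\ell$ and Schur complement $S$. Local rank constancy (a consequence of the whole curve lying in the stratum, not of a single tangent vector) forces $S\equiv 0$ there; then $G\Lambda=0$ forces the blocks $\Lambda_1=\Lambda_2=0$ in the transformed coordinates, and a direct computation gives $\tr{G\,\dot\Lambda}=\tr{S\,\dot\Lambda_3}=0$. Points where the rank jumps are handled by continuity of $\frac{d}{dt}f(x(t))$ together with density of the strata, and integration over the piecewise-smooth path finishes the proof. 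So your proposal is a correct framing with an unproven key lemma at its core; to complete it you would need to supply this rank-stratification/Schur-complement argument (or an equivalent substitute), since purely first-order manipulations of the defining equations cannot produce $\tr{G_0\Delta}=0$.
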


\begin{proof}
Since the subvariety $W$ is irreducible, it is connected in $\mathbb{C}^{n}$,
and hence also path-connected (see \cite[4.1.3]{sawc}).
Define the Lagrangian function in
$(x,\Lambda) \in \mathbb{C}^n\times \mathcal{S}_{\mC}^{m}$:
\[
\mathcal{L}(x, \Lambda)=f(x)- \tr{ G(x)  \Lambda }.
\]
Note that  $\mathcal{L}(x, \Lambda)=f(x)$ for all $(x, \Lambda)\in \mathcal{K}^{\wedge}$.
For two arbitrary points $(x^{(1)}, \Lambda^{(1)})$, $(x^{(2)}, \Lambda^{(2)})$ in $W$,
we show that $f(x^{(1)})=f(x^{(2)})$.
\smallskip 		
%\noindent

Since $W$ is path-connected, there exists a piecewise-smooth curve
$(x(t),\Lambda(t))$ $(0 \leq t \leq 1)$ contained in $W$ such that
\[
(x(0),\Lambda(0))=(x^{(1)},\Lambda^{(1)}),~ (x(1),\Lambda(1))=(x^{(2)}, \Lambda^{(2)}),
\]
with a partition $0 =: t_0 < \cdots < t_N := 1$ such that $(x(t),\Lambda(t) )$
is smooth in each interval $(t_i, t_{i+1})$. For each interval $(t_i, t_{i+1})$,
we show the derivative of $f(x(t))$ at $\bar{t}$ is zero for all $\bar{t}\in (t_i, t_{i+1})$.
Define the sets
\[
S_m  \coloneqq \{t\in (t_i, t_{i+1})\mid \rank\, G(x(t)) =m\}, ~T_m  \coloneqq  cl(S_m)\backslash \{t_i, t_{i+1}\}.
\]
For $\ell=m-1, m-2, \ldots, 1, 0$, let
\[
S_\ell \coloneqq \{t\in (t_i, t_{i+1})\mid \rank\, G(x(t)) =\ell\}\backslash
\cup_{{j=\ell+1}}^m T_j,    ~T_\ell  \coloneqq cl(S_\ell)\backslash \{t_i, t_{i+1}\}.
\] 	

For $\bar{t}\in S_m$,  the matrix $G(x(\bar{t}))$ is invertible. There exists $\epsilon>0$
such that $G(x(t))$ remains invertible for all $t\in (\bar{t}-\epsilon,\bar{t}+\epsilon)$.  Since $G(x(t))\Lambda(t)=0$, it follows that $\Lambda(t)= 0$
for all $t\in (\bar{t}-\epsilon,\bar{t}+\epsilon)$, so
$\frac{\mathrm{d} \Lambda}{\mathrm{d}t}(\bar{t})= 0$ and
\[
\frac{\mathrm{d} f(x(\bar{t})) }{\mathrm{d}t} =
\frac{\mathrm{d} \mathcal{L}(x(\bar{t}))}{\mathrm{d}t} =
\Big( \nabla f(x(\bar{t}))- \nabla G(x(\bar{t}))^*[\Lambda(\bar{t})] \Big)^T \nabla x(\bar{t})=0.
\]
Since $\bar{t}$ is arbitrary in  $S_m$, we conclude that  $f(x(t))$ has  zero gradient
for all $t\in S_m$. The piecewise smoothness of  $x(t)$ implies that $f(x(t))$
also has  zero gradient for all $t\in T_m$.

For $\bar{t}\in S_{\ell}$ with $0 \le \ell \le m-1$, we have $\rank\, G(x(\bar{t}))=\ell$.
Up to a congruent permutation of rows and columns, we can write that
\[
G(x(\bar{t})) =  \left[\begin{array}{cc}
G_1(x(\bar{t}))     &   G_2(x(\bar{t}))   \\
G_2(x(\bar{t}))^T    &   G_3(x(\bar{t}))  \\
\end{array}\right],
\]
where $G_1(x(\bar{t}))$ is an invertible $\ell$-by-$\ell$
complex symmetric matrix. Let
\begin{eqnarray*}
Q(x(t)) & = &  \left[\begin{array}{cc}
I_{\ell}  &   -G_1(x(t))^{-1}G_2(x(t))   \\
0         &   I_{m-{\ell}}    \\
\end{array}\right], \\
S(x(t)) & =  &  G_3(x(t)) - G_2(x(t))^T  G_1(x(t))^{-1}  G_2(x(t)).
\end{eqnarray*}
Then, we have
\[
Q(x(t))^T  G(x(t)) Q(x(t))    =
\left[\begin{array}{cc}
G_1(x(t)) &   0         \\
0         &   S(x(t))   \\
\end{array}\right].
\]
Since $\bar{t}\notin \cup_{j={\ell}+1}^m T_j$,
there exists $\epsilon>0$ such that for all
$t \in (\bar{t}-\epsilon,\bar{t}+\epsilon)$,
\[
\rank\, G_1(x(t)) =  \ell,  \
\rank\, G(x(t))  \, \leq \, \rank\, G(x(\bar{t})) =  \ell  .
\]
Since $\rank\, G(x(t)) \ge \rank\, G_1(x(t))$, we have
\[
\rank\, G(x(t))=\rank\, G_1(x(t))={\ell} \quad \forall \,
t\in (\bar{t}-\epsilon,\bar{t}+\epsilon) .
\]
This implies that  $S(x(t))= 0$ for all $t\in (\bar{t}-\epsilon,\bar{t}+\epsilon)$.
We write that
\[
Q(x(t))^{-1}\Lambda(t) Q(x(t))^{-T}=\left[\begin{array}{cc}
\Lambda_1(t) & \Lambda_2(t) \\
\Lambda_2(t)^T & \Lambda_3(t)\\
\end{array}\right] .
\]
The equation $G(x(t))\Lambda(t)=0$ implies
\begin{eqnarray*}
0&= &Q(x(t))^TG(x(t))Q(x(t)) \cdot Q(x(t))^{-1}\Lambda(t) Q(x(t))^{-T} \\
& = &\left[\begin{array}{cc}
G_1(x(t))\Lambda_1(t) & G_1(x(t))\Lambda_2(t) \\
S(x(t))\Lambda_2(t)^T & S(x(t))\Lambda_3(t)\\
\end{array}\right].
\end{eqnarray*}
Thus, $\Lambda_1(t)=\Lambda_2(t)=0$ for all $t\in (\bar{t}-\epsilon,\bar{t}+\epsilon)$,
since  $G_1(x(t))$ is invertible. For convenience,
we write $G_i(x(t))$ as $G_i$ and $\Lambda_i(t)$ as $\Lambda_i$. Then, we have
\begin{eqnarray*}
\Lambda(t) &=& Q(x(t))\left[\begin{array}{cc}
0 & 0 \\
0 & \Lambda_3\\
\end{array}\right]Q(x(t))^{T}\\
&=& \left[\begin{array}{cc}
G_1^{-1}G_2 \Lambda_3 G_2^TG_1^{-T} & -G_1^{-1}G_2\Lambda_3  \\
-(G_1^{-1}G_2 \Lambda_3)^T & \Lambda_3  \\
\end{array}\right],
\end{eqnarray*}
and
\[
\begin{aligned}
\tr{  G \frac{d\Lambda(t)}{dt}}
& = \tr{ G_1\frac{d(G_1^{-1}G_2\Lambda_3G_2^TG_1^{-1})}{dt} }
-2 \tr{ G_2^T\frac{d(G_1^{-1}G_2\Lambda_3)}{dt} } +
\tr{ G_3\frac{d\Lambda_3}{dt} } \\
&=\tr{G_1\frac{d(G_1^{-1}G_2\Lambda_3)}{dt}G_2^TG_1^{-1} }+
\tr{ G_1G_1^{-1}G_2\Lambda_3\frac{d(G_2^TG_1^{-1})}{dt} }  \\
& \quad -2 \tr{ G_2^T\frac{d(G_1^{-1}G_2\Lambda_3)}{dt} } +
\tr{ G_3\frac{d\Lambda_3}{dt} }  \\
& = -\tr{ G_2^T\frac{d(G_1^{-1}G_2\Lambda_3)}{dt} } +
\tr{ G_2\Lambda_3\frac{d(G_2^TG_1^{-1})}{dt} } +
\tr{ G_3\frac{d\Lambda_3}{dt} } \\
&= -\tr{ G_2^T\frac{d(G_1^{-1}G_2)}{dt}\Lambda_3 }  -
\tr{ G_2^TG_1^{-1}G_2\frac{d\Lambda_3}{dt} }  \\
&\quad + \tr{ G_2\Lambda_3\frac{d(G_2^TG_1^{-1})}{dt} } +
\tr{ G_3\frac{d\Lambda_3}{dt} } .
\end{aligned}
\]
Note that $\Lambda_3$ is symmetric and
\[
\tr{ G_2\Lambda_3\frac{d(G_2^TG_1^{-1})}{dt} } =
\tr{  \frac{d( G_1^{-1} G_2^T)}{dt}  \Lambda_3 G_2^T} =
\tr{ G_2^T \frac{d( G_1^{-1} G_2^T)}{dt}  \Lambda_3 },
\]
so we can further get
\[
\begin{aligned}
\tr{  G(x(t)) \frac{d\Lambda(t)}{dt}}
&= \tr{ G_3\frac{d\Lambda_3}{dt} } -
\tr{ G_2^TG_1^{-1}G_2\frac{d\Lambda_3}{dt} }  \\
&= \tr{ S(x(t)) \frac{d\Lambda_3}{dt} }  =0,
\end{aligned}
\]
which follows from
$
S(x(t))= G_3 - G_2^TG_1^{-1}G_2 = 0.
$
Then, it holds that
\begin{eqnarray*}
\frac{\mathrm{d} f}{\mathrm{d}t}(x(\bar{t}))
& = & \frac{d\mathcal{L}(x(\bar{t}))}{dt} \\
& = & \nabla f(x(\bar{t}))^T\nabla x(\bar{t})-(\nabla G(x(\bar{t}))^* [\Lambda(t)])^T\nabla x(\bar{t})-
\tr{ G(x(\bar{t})) \frac{d\Lambda(\bar{t})}{dt} } \\
&=& (\nabla f(x(\bar{t}))-\nabla G(x(\bar{t}))^* [\Lambda(\bar{t})])^T\nabla x(\bar{t}) -
\tr{  G(x(\bar{t})) \frac{d\Lambda(\bar{t})}{dt} }  \\
&=& - \tr{ G(x(\bar{t}))  \frac{d\Lambda(\bar{t})}{dt} }   =  0.
\end{eqnarray*}
Since $\bar{t}$ is arbitrary in  $S_{\ell}$, we know that  $f(x(t))$ has  zero gradient
for all $t\in S_{\ell}$, and  by the piecewise smoothness of  $x(t)$, also for all $t\in T_{\ell}$.

Since $(t_i,t_{i+1})= \cup_{\ell=0}^mT_{\ell}$,
$f(x(t))$ has zero gradient in $(t_i,t_{i+1})$.
By integration, we have
\[
f( t_{i+1} ) - f( t_i ) =
\int_{t_i}^{ t_{i+1} } \frac{\mathrm{d} f(x(t))}{\mathrm{d}t}  dt  = 0.
\]
This holds for all intervals $(t_i,t_{i+1})$, so
\[
f(x^{(1)})=f(x(t_0))=f(x(t_1))=\cdots=f(x(t_N))=f(x^{(2)}).
\]
Since $(x^{(1)}, \Lambda^{(1)})$, $(x^{(2)}, \Lambda^{(2)})$
are arbitrary points in $W$,
we conclude that $f$ achieves a constant value on $W$.
\end{proof}
\medskip

\subsection{Step2 : decomposition of the critical set
 and SOS representations}

Under Assumption~\ref{ass:multi}, the set of complex critical points of \reff{nsdp} is
\begin{equation}\label{kktcri}
\mathcal{K} :=\left\{\begin{array}{l|l}
x \in \mathbb{C}^n &
\begin{array}{r}
\nabla f(x)-\nabla G(x)^*[\Theta(x)] = 0, \\
G(x)\Theta(x)=0
\end{array}
\end{array}\right\}.
\end{equation}
Suppose $\mathcal{K}$ has the decomposition of irreducible subvarieties:
\[
\mathcal{K}  \, = \,  W_1 \cup \cdots \cup W_s .
\]
By Lemma \ref{kktfini}, if $W_i \cap \re^n \ne \emptyset$,
then $f$ achieves a real constant value $v_i$ on  $W_i$.
Without loss of generality, assume that $f$ achieves $r$ distinct real values, ordered as
\[ v_1   <  v_2  < \cdots < v_r . \]
Let $\mathcal{K}_i$ be the union of all subvarieties $W_i$ on which $f$
achieves the real constant  value $v_i$ and let $\mathcal{K}_0$
be the union of all remaining $W_i$. Then,
\[
\mathcal{K}=\mathcal{K}_0 \cup \mathcal{K}_1 \cup \cdots \cup \mathcal{K}_r,\quad
\mathcal{K}_0 \cap \mathbb{R}^n=\emptyset,
\]
and $f \equiv v_i$ on $\mathcal{K}_i$ for $i=1, \ldots, r$.
Since the values $v_i$ are pairwise distinct, the subvarieties
$\mathcal{K}_0, \mathcal{K}_1, \ldots, \mathcal{K}_r$
are disjoint from each other.  Denote the ideal
\begin{equation}\label{equI}
J  \coloneqq  \ideal{ G \Theta,~\nabla f-\nabla G^*[\Theta] }.
\end{equation}
By the primary decomposition theorem \cite{ggm,Stu02},
there exist ideals $J_0,J_1,\dots,J_r \subseteq \mR[x]$ such that
\begin{equation}\label{decompI}
J  = J_0 \cap J_1 \cap \ldots \cap J_r
\end{equation}
and $\mathcal{K}_i=V_{\mathbb{C}}(J_i)$ for $i=0, \ldots, r$.
By \cite[Proposition 9]{sk09},  there exist polynomials
$g_1,\dots,g_{\ell_1}\in  \qmod{G,\Theta}$ such that
\begin{equation}\label{def:Gtheta}
\boxed{
\baray{c}
G(x) \succeq 0,~\Theta(x) \succeq 0 \quad \Longleftrightarrow \quad  \\
g_1(x)\geq 0, \dots,  g_{\ell_1}(x)\geq 0 .
\earay
}
\end{equation}

\begin{lem} \label{idealcons}
Let $J, J_0, \ldots, J_r$ be the  ideals defined above.
Suppose $v_{r_0}=f_{\min}$ for some $1\leq r_0\leq r$
and the set $J + \qmod{G,\Theta}$ is Archimedean. Then, we have:
\bit
\item[(i)] For $i=0$,  there exists $\sigma_0\in \Sigma[x]$ such that
$f-f_{\min}-\sigma_0\in J_0$.
\item[(ii)] For $i=1,\dots,r_0-1$, there exists $\sigma_i\in \mR[x]$ such that
\[
f-f_{\min}-\sigma_i\in J_i, \quad \sigma_i\in J_i+\qmod{G,\Theta} .
\]				
\item[(iii)] For $i=r_0$, there exists an integer $k_0>0$ such that for every $\epsilon>0$,
\[
f-f_{\min}+\epsilon-\sigma_{r_0,\epsilon} \in (J_{r_0})_{2k_0}
\quad \text{for some} \quad \sigma_{r_0,\epsilon} \in \Sigma[x]_{2k_0} .
\]
\item[(iv)] For $i=r_0+1,\dots,r$, there exists  $\sigma_i\in \Sigma[x]$ such that
$f-f_{\min}-\sigma_i\in J_i$.
\eit
\end{lem}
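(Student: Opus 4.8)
The plan is to treat the four cases according to where the value $v_i$ sits relative to $f_{\min}=v_{r_0}$, handling the "easy" strata $i\in\{1,\dots,r_0-1\}$ and $i\in\{r_0+1,\dots,r,0\}$ separately from the critical stratum $i=r_0$. Throughout, I would work with the quotient ring $\mR[x]/J_i$ and the image of the quadratic module $\qmod{G,\Theta}$ in it; by \reff{def:Gtheta} the semialgebraic description $G\succeq 0,\Theta\succeq 0$ is equivalent to scalar inequalities $g_1\ge 0,\dots,g_{\ell_1}\ge 0$ with all $g_j\in\qmod{G,\Theta}$, so the relevant variety for each $J_i$ is the basic closed semialgebraic set cut out by these $g_j$ inside $V_\mR(J_i)$. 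The key structural input is that $J+\qmod{G,\Theta}$ is Archimedean and that $J=J_0\cap\cdots\cap J_r$ with the $V_\mC(J_i)=\mc K_i$ pairwise disjoint, so localizing at a single component $J_i$ inherits a (quotient-level) Archimedean property: on $V_\mR(J_i)$ the relevant feasible set is compact.

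First, for $i$ with $v_i>f_{\min}$ (cases (iv) and the component-at-infinity part of (i)): here $f-f_{\min}$ is either identically $v_i-f_{\min}>0$ on $V_\mR(J_i)$ (a positive constant, case (iv)), or $V_\mR(J_i)=\emptyset$ (case (i), since $\mc K_0\cap\mR^n=\emptyset$). In the first situation $f-f_{\min}$ is a strictly positive constant modulo $J_i$ on a compact basic closed set, so the matrix Putinar Positivstellensatz \cite{schhol} applied in $\mR[x]/J_i$ — equivalently, Putinar's theorem for the Archimedean quadratic module generated by $\qmod{G,\Theta}$ modulo $J_i$ — gives $f-f_{\min}-\sigma_i\in J_i$ with $\sigma_i\in\Sigma[x]$ (absorbing the $\qmod{G,\Theta}$-part, which is itself SOS-supported after using \reff{def:Gtheta} and, if needed, absorbing the $v_t^TGv_t$-type terms; more simply one just writes $f-f_{\min}\in J_i+\qmod{G,\Theta}$ and notes that modulo $J_i$ a positive constant is a sum of squares plus something in the module, then reorganizes). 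For the empty real variety case, the real Nullstellensatz / the fact that $-1$ lies in the preordering forces $f-f_{\min}$ itself to be congruent to an SOS modulo $J_i$ (one can write $-1 = \tau + p$ with $\tau\in\Sigma[x]$, $p\in J_i$, hence any polynomial is $\equiv$ SOS mod $J_i$). This disposes of (iv) and (i).

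Second, for $i$ with $v_i<f_{\min}$ (case (ii), $i=1,\dots,r_0-1$): on $V_\mR(J_i)$ we have $f\equiv v_i<f_{\min}$, so $f-f_{\min}\equiv v_i-f_{\min}$ is a \emph{negative} constant there and we cannot expect an SOS certificate. Instead the claim only asks for $\sigma_i\in\mR[x]$ with $f-f_{\min}-\sigma_i\in J_i$ and $\sigma_i\in J_i+\qmod{G,\Theta}$; the natural choice is $\sigma_i := f-f_{\min}$ itself (then trivially $f-f_{\min}-\sigma_i=0\in J_i$), and the content is showing $f-f_{\min}\in J_i+\qmod{G,\Theta}$. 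Since $\qmod{G,\Theta}$ modulo $J_i$ is Archimedean, $\mR[x]/J_i$ is generated (as needed) and the \emph{negative} constant $v_i-f_{\min}$ need not lie in the module — but here is the point I expect to be the crux: one uses that on $\mc K_i$ the matrices $G(x)$ and $\Theta(x)$ satisfy $G\Theta=0$ with $G,\Theta$ "both present," and the required membership should follow from the representation analysis of the critical set rather than from positivity; concretely, I would invoke the decomposition together with \cite[Proposition 9]{sk09} and the Archimedean hypothesis to get that \emph{every} element of $\mR[x]$ lies in $J_i+\qmod{G,\Theta}$ when $V_\mR(J_i)$ has the property that the module is Archimedean on it — this is because an Archimedean module with compact (possibly empty) real spectrum, when the target is not required to be nonnegative, can represent arbitrary polynomials only if the real spectrum is empty; so in fact one argues $V_\mR(J_i)=\emptyset$ is \emph{not} the case, and instead the correct statement is that $f-f_{\min}$ is $\le$ a strictly negative constant, hence $f_{\min}-f-\delta$ for small $\delta>0$ is strictly positive on $V_\mR(J_i)$, giving by Putinar $f_{\min}-f-\delta\in J_i+\qmod{G,\Theta}$, and then adding back shows $f-f_{\min}\in J_i+\qmod{G,\Theta}$ up to sign bookkeeping — this sign issue is the delicate part and I would handle it by instead certifying $-(f-f_{\min})+(v_i-f_{\min}$ shifted$)$, i.e. applying Putinar to the positive polynomial $f_{\min}-f$ (a positive constant on $V_\mR(J_i)$) to get $f_{\min}-f-\tilde\sigma_i\in J_i$ with $\tilde\sigma_i\in\qmod{G,\Theta}$, whence $f-f_{\min}\equiv -\tilde\sigma_i \pmod{J_i}$ and thus $f-f_{\min}\in J_i+\qmod{G,\Theta}$ with $\sigma_i=f-f_{\min}$, as claimed.

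Finally, case (iii), the active stratum $i=r_0$: on $V_\mR(J_{r_0})$ we have $f\equiv f_{\min}$, so $f-f_{\min}+\epsilon\equiv\epsilon>0$ is a strictly positive constant there for every $\epsilon>0$. Applying the matrix Putinar Positivstellensatz in $\mR[x]/J_{r_0}$ (valid since $J_{r_0}+\qmod{G,\Theta}$ is Archimedean, inherited from $J+\qmod{G,\Theta}$ together with the disjointness of the $\mc K_i$ and the Chinese-remainder splitting of $J$) yields, for each $\epsilon$, a representation $f-f_{\min}+\epsilon-\sigma_\epsilon\in J_{r_0}$ with $\sigma_\epsilon\in\qmod{G,\Theta}$; using \reff{def:Gtheta} to replace $\qmod{G,\Theta}$ by the scalar module in $g_1,\dots,g_{\ell_1}$ and then absorbing those $g_j$ — which are \emph{fixed} elements of $\qmod{G,\Theta}$ — into the ideal part is not legitimate in general, so instead the point is a \emph{degree-uniformity} claim: the degree $k_0$ can be chosen independent of $\epsilon$. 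I expect this uniform bound to be the main obstacle. The standard device (as in the scalar case, e.g. \cite{nieopcd,Tight18}) is: pick $\epsilon_0=1$, obtain a representation at some order $k_0$ for $f-f_{\min}+1$; then for $0<\epsilon<1$ write $f-f_{\min}+\epsilon = \epsilon\cdot(f-f_{\min}+1) + (1-\epsilon)\cdot(f-f_{\min})$, and separately certify $f-f_{\min}$ itself — but $f-f_{\min}$ is only $\ge 0$ (equals $0$) on $V_\mR(J_{r_0})$, not strictly positive, so it need not have a bounded-order Putinar representation modulo $J_{r_0}$. The resolution I would pursue: since $f-f_{\min}\equiv 0$ on all of $\mc K_{r_0}=V_\mC(J_{r_0})$, the real radical/Nullstellensatz-type argument gives $(f-f_{\min})^{2N}\in J_{r_0}$ or more usefully $f-f_{\min}\in\sqrt{J_{r_0}}$; combined with Archimedean-ness this is exactly the setting where a uniform $k_0$ exists — one writes $f-f_{\min}+\epsilon$ and uses that $f-f_{\min}$ lies in the (real) radical to show the Putinar degree needed for $f-f_{\min}+\epsilon$ does not blow up as $\epsilon\to 0^+$, via Schweighofer-type effective bounds or, more cleanly, by first reducing to the quotient $\mR[x]/\sqrt{J_{r_0}}$ where $f-f_{\min}\equiv 0$ identically so that $f-f_{\min}+\epsilon\equiv\epsilon$ and any order-$d_0$ SOS works, then lifting back through the finitely many generators of $\sqrt{J_{r_0}}/J_{r_0}$ at a fixed cost. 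Assembling these gives the fixed $k_0$ with $f-f_{\min}+\epsilon-\sigma_{r_0,\epsilon}\in(J_{r_0})_{2k_0}$, $\sigma_{r_0,\epsilon}\in\Sigma[x]_{2k_0}$, completing (iii) and the lemma.
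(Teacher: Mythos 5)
Your part (i) matches the paper's argument, but the other three parts have genuine gaps. In (iv) (and in your parenthetical ``reorganize'' shortcut), note that $f\equiv v_i$ only on $V_{\mC}(J_i)$, i.e.\ only modulo the radical $\sqrt{J_i}$, not modulo $J_i$; and Putinar applied with the Archimedean module $J_i+\qmod{G,\Theta}$ only yields $f-f_{\min}\in J_i+\qmod{G,\Theta}$, whose module part contains terms $v_t^TGv_t$ that are not sums of squares and cannot be ``absorbed''. So you do not obtain the asserted $\sigma_i\in\Sigma[x]$ with $f-f_{\min}-\sigma_i\in J_i$. The paper gets this exact membership by Hilbert's strong Nullstellensatz, $(f/v_i-1)^{\eta_i}\in J_i$, taking $\sigma_i=s_i^2$ where $s_i$ is the truncated binomial series for $\sqrt{v_i\,(1+(f/v_i-1))}$, so that $f-\sigma_i$ is a combination of the powers $f$-shifted to order at least $\eta_i$ and hence lies in $J_i$. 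In (ii) your final deduction has a sign error: from $f_{\min}-f-\tilde\sigma_i\in J_i$ with $\tilde\sigma_i\in\qmod{G,\Theta}$ you only get $f-f_{\min}\in J_i-\qmod{G,\Theta}$, and a quadratic module is not closed under negation, so this does not give $\sigma_i=f-f_{\min}\in J_i+\qmod{G,\Theta}$. Moreover you never invoke the fact that actually drives (ii): since $v_i<f_{\min}$ and $f_{\min}$ is the minimum over the feasible set, $V_{\mR}(J_i)\cap\{x: G(x)\succeq0,\ \Theta(x)\succeq0\}=\emptyset$. The paper converts this emptiness, via the scalarization \reff{def:Gtheta} and the Positivstellensatz, into $2+\phi\in J_i$ with $\phi$ in the preordering of $g_1,\dots,g_{\ell_1}\in\qmod{G,\Theta}$, then applies Putinar to $1+\phi$ and builds $\sigma_i$ by an explicit identity. (Emptiness plus Archimedeanness would indeed give $-1\in J_i+\qmod{G,\Theta}$ and thus rescue your choice $\sigma_i=f-f_{\min}$, but that argument must pass through the emptiness of the feasible part of $V_{\mR}(J_i)$, which you never establish.)

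In (iii) the uniformity of $k_0$ in $\epsilon$ is the entire content, and your proposal leaves it unproved: ``reduce modulo $\sqrt{J_{r_0}}$ and lift back at fixed cost'' is precisely the construction that is needed, and Schweighofer-type effective Putinar bounds degrade as the positivity gap $\epsilon\to0$, so they cannot supply a degree bound independent of $\epsilon$. The paper's resolution is elementary and explicit: $f^{\eta}\in J_{r_0}$ by the Nullstellensatz, and with
$\sigma_{r_0,\epsilon}:=\bigl(\sqrt{\epsilon}\sum_{j=0}^{\eta-1}\binom{1/2}{j}(f/\epsilon)^j\bigr)^2$
one has $f+\epsilon-\sigma_{r_0,\epsilon}=\sum_{j=0}^{\eta-2}b_j(\epsilon)f^{\eta+j}\in J_{r_0}$, where only the coefficients, not the degrees, depend on $\epsilon$; this truncated-binomial-square identity is the uniform-in-$\epsilon$ certificate your argument is missing. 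As written, the proposal establishes (i) but not (ii), (iii), or (iv).
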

%%%%%%%%%%%%%%%%%%%%%%%%%%%%%%%%%%%%%%%%%%%%%%%%%%			
\begin{proof}
Up to shifting $f$ by a constant, we can generally assume $f_{\min}=0$.

(i) For $i=0$, $V_{\mathbb{R}}\left(J_0\right) = \emptyset $.
By Real Nullstellensatz \cite[Corollary 4.1.8]{bcr}, there exists  $\tau_0 \in \Sigma[x]$
such that $1+\tau_0 \in J_0$. Let
\[
\sigma_0:=\frac{1}{4}(f+1)^2+\frac{\tau_0}{4}(f-1)^2 \in \Sigma[x].
\]
Then, we have that
\[
f-\sigma_0=\frac{1}{4}(f+1)^2-\frac{1}{4}(f-1)^2-\sigma_0=-\frac{1+\tau_0}{4}(f-1)^2\in J_0.
\]

(ii)
For $i=1,\dots,r_0-1$,  we have that $v_i<f_{\min}=0$, and thus
\[
V_{\mathbb{R}}(J_i)\cap \{x\in \mR^n: G(x) \succeq 0,~\Theta(x) \succeq 0\}=\emptyset.
\]
It follows from \reff{def:Gtheta} that
\[
V_{\mathbb{R}}(J_i)\cap \{x\in \mR^n:~
g_1(x)  \geq 0,  \dots,  g_{\ell_1}(x)\geq 0\}  =  \emptyset.
\]
%\ter{
%Since $J \subseteq J_i$, the set $J_i+\qmod{G,\Theta}$ is also Archimedean. By weak Positivstellensatz \cite[Corollary 5.4.1]{bcr},  we have $-2\in J_i+\qmod{G,\Theta}$, i.e.,  there exists $\phi \in \qmod{G,\Theta}$ such that $2+\phi \in J_i$.  Let
%\[
%\sigma_i:=\frac{1}{4}(f+1)^2+\frac{1+\phi}{4}(f-1)^2 \in J_i+\qmod{G,\Theta}.
%\]
%Then, we have
%\[
%f-\sigma_i  = \frac{1}{4}(f+1)^2-\frac{1}{4}(f-1)^2-\sigma_i
%=  -\frac{2+\phi}{4}(f-1)^2  \in  J_i.
%\]
%}
By Stengle's Positivstellensatz \cite[Corollary 4.4.3]{bcr}, there exists
\[
\phi  ~= \sum_{ \af = (\af_1, \ldots, \af_{\ell_1}) \in  \{0,1\}^{\ell_1}  }
\phi_{\alpha} g_1^{\alpha_1} \cdots g_{\ell_1}^{\alpha_{\ell_1}},
\quad \text{with} \quad  \phi_{\alpha}\in \Sigma[x],
\]
such that $2+\phi\in J_i$.
Note that $J \subseteq J_i$, and the set $J+\qmod{G,\Theta}$ is Archimedean.
The following statement holds:
\[
1+\phi(x)>0 \quad \text{for all} \quad
x\in V_{\mR}(J)\cap\{ G(x) \succeq 0,~\Theta(x) \succeq 0\}.
\]

By Putinar's Positivstellensatz \cite{putinar1993positive}, we have
$1+\phi\in J+\qmod{G,\Theta}\subseteq  J_i+\qmod{G,\Theta}$.  Let
\[
\sigma_i:=\frac{1}{4}(f+1)^2+\frac{1+\phi}{4}(f-1)^2 \in J_i+\qmod{G,\Theta}.
\]
Then, we have
\[
f-\sigma_i  = \frac{1}{4}(f+1)^2-\frac{1}{4}(f-1)^2-\sigma_i
=  -\frac{2+\phi}{4}(f-1)^2  \in  J_i.
\]

(iii)
For $i=r_0$,  we have $v_{i}=f_{\min}=0$, so
$f \equiv 0$ on $V_{\mathbb{C}}(J_{i})$.
By Hilbert's Strong Nullstellensatz \cite{blo}, there exists an integer
$\eta>0$ such that $f^{\eta} \in J_{r_0}$. For $\epsilon>0$, let
\[
s_{r_0,\epsilon}(x)  \coloneqq  \sqrt{\epsilon} \sum_{j=0}^{\eta-1}
\binom{\frac{1}{2}}{j} (f/\eps )^j,~~~
\sigma_{r_0,\epsilon}  \coloneqq   s_{r_0,\epsilon}^2.
\]
It can be verified directly that
\[
f+\epsilon-\sigma_{r_0,\epsilon} = \eps \sqrt{1 + f/\eps }^2 - \sigma_{r_0,\epsilon}
=\sum_{j=0}^{\eta-2} b_j(\epsilon) f^{\eta+j},
\]
where  $b_j(\epsilon)$ are real scalars depending only on $\epsilon$.
Note that $f^{\eta} \in J_{r_0}$ and the degree of $\sigma_{r_0,\epsilon}$
is independent of $\epsilon$. For sufficiently large integer $k_0$,
we have $\sigma_{r_0,\epsilon} \in \Sigma[x]_{2k_0}$ and
$f+\epsilon-\sigma_{r_0,\epsilon} \in (J_{r_0})_{2k_0}$ for all $\epsilon>0$.

(iv) For $i=r_0+1, \ldots, r$, we have that $v_i>f_{\min}=0$, i.e.,
$f /v_i-1 \equiv 0$ on $V_{\mathbb{C}}(J_i).$
By Hilbert's Strong Nullstellensatz \cite{blo}, there exists $\eta_i \in \mathbb{N}$
such that $ \left(f/v_i - 1 \right)^{\eta_i} \in J_i$. Let
\[
s_i   \coloneqq  \sqrt{v_i} \sum_{j=0}^{\eta_i-1}\binom{\frac{1}{2}}{j}
\left( f/v_i - 1 \right)^j,
\sigma_i:=s_i^2.
\]
Similarly to (iii), we have  $f-\sigma_i\in J_i$.
\end{proof}

\subsection{Step 3: proof of tightness}
To prove Theorem~\ref{thm:finite},  it suffices to show that there exists an order $k$ such that for all $\epsilon>0$, the following holds:
\begin{equation}\label{f-fmin=SOS}
f - f_{\min} +  \epsilon \in \ideal{G \Theta,\nabla f - \nabla G^*[\Theta] }_{2k}
+\qmod{G,\Theta}_{2k} .
\end{equation}

\bigskip \noindent
{\it Proof of Theorem \ref{thm:finite}.}
Up to  shifting $f$ by a constant, we  assume $f_{\min}=0$.
Suppose that $v_{r_0} = 0$ for some $1\leq r_0\leq r$.
Since the complex varieties $V_{\mC}(J_0), \dots, V_{\mC}(J_r)$ are disjoint from each other,
it follows from \cite[Lemma 3.3]{niejac} that there exist polynomials
$a_0, \ldots, a_r \in \mathbb{R}[x]$ such that
\[
a_0^2+\cdots+a_r^2-1 \in J,
\]
\[
a_i \in \bigcap_{i\neq j \in \{0,\dots,r\}} J_{j}
\quad \text{for} \quad i=0,\dots,r .
\]
By Lemma \ref{idealcons}, we have $f-\sigma_i \in J_i$ for  $i \neq r_0$ and $f+\epsilon-\sigma_{r_0,\epsilon} \in J_{r_0}$, then
$$
(f-\sigma_i) a_i^2 \in \bigcap_{i=0}^r J_i = J,~~~~\forall \, i \neq r_0,
$$
$$
(f+\epsilon-\sigma_{r_0,\epsilon}) a_{r_0}^2 \in  \bigcap_{i=0}^r J_i  = J,
$$
$$
(f+\epsilon)\left(1-a_{0}^2-\cdots-a_r^2\right) \in J.
$$
For $\epsilon>0$, let
\[
\sigma_{\epsilon}  \coloneqq  \sigma_{r_0,\epsilon}a_{r_0}^2+
\sum_{r_0 \neq i \in\{0, \ldots, r\}} \left(\sigma_i+\epsilon\right) a_i^2.
\]
Then, we have
\begin{eqnarray*}
f+\epsilon-\sigma_\epsilon & = & (f+\epsilon)\left(1-a_0^2-\cdots-a_r^2\right) \\
& & +\left(f+\epsilon-\sigma_{r_0,\epsilon}\right) a_{r_0}^2+
\sum_{r_0 \neq i \in\{0, \ldots, r\}}\left(f-\sigma_i\right) a_i^2 \in J.
\end{eqnarray*}
By Lemma \ref{idealcons} (iii),
there exists $k_0\in \mathbb{N}$ such that for all  $\epsilon>0$, we have
\[
\sigma_{r_0,\epsilon},~f+\epsilon-\sigma_{r_0,\epsilon}
\in J_{2k_0}+\qmod{G,\Theta}_{2k_0}.
\]
Hence,      when $k$ is sufficiently large, it holds that
\[
\sigma_{\epsilon},\,\,f+\epsilon-\sigma_{\epsilon} \in J_{2k}+\qmod{G,\Theta}_{2k}.
\]
This implies that \reff{f-fmin=SOS} holds and
$f_{k,\mathrm{sos}} \geq f_{\min}-\epsilon$ for all $\epsilon>0$.
Since $f_{k,\mathrm{sos}} \leq f_{\min}$ for all $k$, we  conclude $f_{k,\mathrm{sos}}=f_{\min}$
for all $k$ sufficiently large.
\hfill\tallopenbox

It is possible that the optimal value of the SOS relaxation \reff{sos} is not attained for all the big relaxation orders in practice, even when finite convergence holds. Consider, for example, the unconstrained optimization problem
\begin{equation} \label{sp1}
	\min \quad  x_1^8+x_2^8+x_3^8+M(x),  	
\end{equation}
where	$M(x)=x_1^2 x_2^4+x_2^2 x_1^4+x_3^6-3 x_1^2 x_2^2 x_3^2$ is the Motzkin polynomial. Then, the equation in \reff{sec3:nsdp:eq}  reads as $\nabla f(x)=0$, equivalently,
\[
8x_i^7+\frac{\partial M}{\partial x_i}(x)=0,\quad i=1,2,3.
\]
The set $\{x\in \mR^n: \nabla f(x)=0\}$ is compact,
so the ideal $\ideal{\nabla f}$ is Archimedean.
Thus the assumptions of Theorem 4.2 are satisfied, and the hierarchy \reff{mom}-\reff{sos} is tight.
However, the optimal value of \reff{sos} is not attainable for all sufficiently large  $k$,
because  it is not a sum of squares modulo $\ideal{\nabla f}$ as shown in \cite{NDS06}.

Recall that $g_1,\dots,g_{\ell_1}$ are polynomials as in \reff{def:Gtheta}.
For convenience, let $h_1,\cdots, h_{\ell_2}$ be polynomials such that
\begin{equation}\label{ideal:c1cell}
\ideal{G \Theta,  \nabla f - \nabla G^*[\Theta] }  ~=~
\ideal{ h_1,\dots, h_{\ell_2} }.
\end{equation}
The following result is a direct consequence of \reff{f-fmin=SOS}, as shown in the proof of Theorem~\ref{thm:finite}.

\begin{cor}  \label{thocor}
Under the assumptions of Theorem~\ref{thm:finite},
there exists an order $k > 0$ such that for all $\epsilon>0$, it holds
\be
\left\{ \baray{c}
f-f_{\min}+\epsilon=\sum\limits_{i=1}^{\ell_2} \phi_i^{\epsilon} h_i
+ \psi_0^{\epsilon}  + \sum\limits_{j=1}^{\ell_1} \psi_j^{\epsilon} g_j, \\
\phi_1^{\epsilon},\dots,\phi_{\ell_2}^{\epsilon}\in \mR[x],
\psi_0^{\epsilon},\dots,\psi_{\ell_1}^{\epsilon}\in \Sigma[x], \\[1.3ex]
\deg(\phi_i^{\epsilon}h_i)\leq 2k, ~ \deg( \psi_0^{\epsilon} ) \le 2k,
~\deg(\psi_j^{\epsilon} g_j) \leq 2k .
\earay \right.
\end{equation}
\end{cor}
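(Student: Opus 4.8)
The plan is to unpack the membership \reff{f-fmin=SOS} proved inside the argument for Theorem~\ref{thm:finite}, and then rewrite its ideal part and its quadratic-module part in terms of the fixed generators $h_1,\dots,h_{\ell_2}$ of \reff{ideal:c1cell} and $g_1,\dots,g_{\ell_1}$ of \reff{def:Gtheta}. Recall from \reff{equI} and \reff{ideal:c1cell} that $J=\ideal{G\Theta,\,\nabla f-\nabla G^*[\Theta]}=\ideal{h_1,\dots,h_{\ell_2}}$. From the proof of Theorem~\ref{thm:finite} we obtain an order $k$, independent of $\epsilon$, with
\[
f-f_{\min}+\epsilon \,=\, p_\epsilon + q_\epsilon, \qquad
p_\epsilon\in J_{2k}, \qquad q_\epsilon\in \qmod{G,\Theta}_{2k}.
\]
Since each of the generators $G\Theta$ and $\nabla f-\nabla G^*[\Theta]$ is a fixed-degree polynomial combination of the $h_i$, after enlarging $k$ by a bounded amount we may write $p_\epsilon=\sum_{i=1}^{\ell_2}\phi_i^\epsilon h_i$ with $\phi_i^\epsilon\in\mR[x]$ and $\deg(\phi_i^\epsilon h_i)\le 2k$. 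So it remains to show that $q_\epsilon$, up to an element of $J$ of controlled degree, lies in $\qmod{g_1,\dots,g_{\ell_1}}_{2k}$.

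For the quadratic-module part I would revisit the proof of Theorem~\ref{thm:finite}. There $q_\epsilon$ is assembled from the outputs of Lemma~\ref{idealcons}: the pieces from parts (i), (iii), (iv), together with the correction terms $\epsilon a_i^2$, are plain SOS polynomials and feed into $\psi_0^\epsilon$, while the products $\sigma_i a_i^2$ and $\sigma_{r_0,\epsilon}a_{r_0}^2$, combined with $J_i a_i^2\subseteq J$ and with closedness of quadratic modules under multiplication by squares, keep the running expression of the form (element of $J$) $+$ (element of the relevant quadratic module). The only step where the matrix module $\qmod{G,\Theta}$ is genuinely used beyond $\Sigma[x]$ is Lemma~\ref{idealcons}(ii), where for $i=1,\dots,r_0-1$ one needs $1+\phi\in J_i+\qmod{G,\Theta}$, and there the application of Putinar's Positivstellensatz is vacuous, since the underlying real set $V_{\mR}(J_i)\cap\{G\succeq 0,\,\Theta\succeq 0\}$ is empty. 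By \reff{def:Gtheta} this set equals $V_{\mR}(J_i)\cap\{g_1\ge 0,\dots,g_{\ell_1}\ge 0\}$, which is thus also empty, so the same (vacuous) step gives $1+\phi\in J_i+\qmod{g_1,\dots,g_{\ell_1}}$ and hence $\sigma_i\in J_i+\qmod{g_1,\dots,g_{\ell_1}}$. Replaying the assembly $\sigma_\epsilon=\sigma_{r_0,\epsilon}a_{r_0}^2+\sum_{i\neq r_0}(\sigma_i+\epsilon)a_i^2$ with this substitution yields
\[
f-f_{\min}+\epsilon=(f-f_{\min}+\epsilon-\sigma_\epsilon)+\sigma_\epsilon\in J+\qmod{g_1,\dots,g_{\ell_1}},
\]
where the $J$-part is absorbed into the $\phi_i^\epsilon h_i$ and the module part is exactly $\psi_0^\epsilon+\sum_{j=1}^{\ell_1}\psi_j^\epsilon g_j$ with $\psi_0^\epsilon,\dots,\psi_{\ell_1}^\epsilon\in\Sigma[x]$. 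Since $k$, the degrees of the $a_i$, of the generators of the $J_i$, and of all the Nullstellensatz/Positivstellensatz certificates entering Lemma~\ref{idealcons} are fixed and independent of $\epsilon$, every degree increase above is bounded, so enlarging $k$ once more yields the stated bounds $\deg(\phi_i^\epsilon h_i)\le 2k$, $\deg(\psi_0^\epsilon)\le 2k$, $\deg(\psi_j^\epsilon g_j)\le 2k$ for all $\epsilon>0$.

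I expect the one delicate point to be the passage from $\qmod{G,\Theta}$ to $\qmod{g_1,\dots,g_{\ell_1}}$ used above: to run the scalar Putinar's Positivstellensatz on the empty set $V_{\mR}(J_i)\cap\{g_j\ge 0\}$ one should first check that $J_i+\qmod{g_1,\dots,g_{\ell_1}}$ is Archimedean. This should follow from the hypothesis that $J+\qmod{G,\Theta}$ is Archimedean — whence $V_{\mR}(J)\cap\{g_j\ge 0\}$ is compact — together with the structure of the construction in \cite{sk09} (e.g.\ including a redundant ball-type element among the $g_j$), using $J\subseteq J_i$. Alternatively one bypasses Archimedeanity entirely: since the relevant real set is empty, Stengle's Positivstellensatz puts $1+\phi$ in $J_i$ plus the preordering of $g_1,\dots,g_{\ell_1}$, which becomes a quadratic-module representation once the list $g_1,\dots,g_{\ell_1}$ is closed under products. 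Either way the argument goes through thanks to \reff{def:Gtheta} and \cite{sk09}, and everything else is routine degree bookkeeping.
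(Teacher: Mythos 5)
Your outline up to the last step matches what the paper intends: Corollary~\ref{thocor} is obtained by reading the representation off from \reff{f-fmin=SOS}, i.e.\ from the certificate built in the proof of Theorem~\ref{thm:finite} (whose degrees are independent of $\epsilon$), and by rewriting the ideal part in terms of the fixed generators $h_1,\dots,h_{\ell_2}$ of \reff{ideal:c1cell}; your handling of the uniformity in $\epsilon$ and of the change of ideal generators is fine, and you are also right that the only source of non-SOS module terms is Lemma~\ref{idealcons}(ii). The genuine gap is exactly the point you flag and then do not settle: converting the $\qmod{G,\Theta}$-part of the certificate into the scalar form $\psi_0^{\epsilon}+\sum_j\psi_j^{\epsilon}g_j$. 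Neither of your two fixes works as stated. For the first, the Archimedean property of $J_i+\qmod{g_1,\dots,g_{\ell_1}}$ does not follow from that of $J+\qmod{G,\Theta}$: the latter only yields compactness of $V_{\mR}(J)\cap\{g_1\ge 0,\dots,g_{\ell_1}\ge 0\}$, and compactness of the set does not make a quadratic module Archimedean (this depends on the chosen description); moreover \reff{def:Gtheta} only gives $g_j\in\qmod{G,\Theta}$, so $\qmod{g_1,\dots,g_{\ell_1}}$ is the smaller module and no ball-type generator is guaranteed to appear in the fixed list. For the second, bypassing Putinar via Stengle's Positivstellensatz places $1+\phi$ in $J_i$ plus the \emph{preordering} of $g_1,\dots,g_{\ell_1}$, i.e.\ with SOS coefficients multiplying products $g_{j_1}g_{j_2}\cdots$; that is not of the form $\psi_0+\sum_j\psi_j g_j$ for the list fixed in \reff{def:Gtheta}, and ``closing the list under products'' proves a different statement. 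So, as written, your argument establishes at best a preordering version of the corollary, not the quadratic-module version claimed.

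A smaller caution: describing the Putinar step in Lemma~\ref{idealcons}(ii) as ``vacuous'' is misleading. Only the positivity hypothesis is vacuously satisfied; the conclusion $1+\phi\in J_i+\qmod{G,\Theta}$ still uses the Archimedean hypothesis for that module, so the same step cannot simply be rerun with $\qmod{g_1,\dots,g_{\ell_1}}$ unless its Archimedean property is first established — which is precisely the missing ingredient. (The paper itself passes over this conversion silently, treating the corollary as an immediate rewriting of \reff{f-fmin=SOS}; your attempt makes the difficulty visible but does not close it.)
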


\section{Detection of tightness and extraction of minimizers}
\label{sc:detect}
In computational practice, flat truncation
is typically used to detect tightness of Moment--SOS relaxations and
to extract minimizers (see \cite{CF05,HenLas05,hdlb,Lau05,nie2013certifying}).
Let $y^{(k)}$ be a minimizer of the moment relaxation~\reff{mom} for the order $k$.
The flat truncation is said to hold for $y^{(k)}$
if there exists an integer $t \in[ d_0, k]$ such that
\begin{equation}\label{FT:y*}
\operatorname{rank} M_{t} [ y^{(k)} ] \, = \,
\operatorname{rank} M_{t-d_0}[ y^{(k)} ] .
\end{equation}
When the relation \reff{FT:y*} holds, the truncation $y^{(k)}|_{2t}$ admits an $r$-atomic probability measure
$\mu$ with $r=\operatorname{rank} M_{t} [ y^{(k)} ]$ and the support of $\mu$ is contained in $K$.
That is, there exist $r$ distinct points $u_1,\ldots, u_r \in K$
and positive scalars $\tau_j$ such that
\[
y^{(k)}|_{2t}   =  \sum_{j=1}^r \tau_j [u_j]_{2t}, \quad
\sum_{j=1}^r \tau_j=1.
\]
Furthermore, the moment relaxation  \reff{mom} is tight  (i.e., $f_{k,\mathrm{mom}}=f_{\min}$) and $u_1, \dots, u_r$ are minimizers of \reff{nsdp:equ}.
The points $u_j$ can be computed by
eigenvalue computations and Schur decomposition \cite{HenLas05}.
Under some general assumptions, we show that every minimizer
$y^{(k)}$ of  \reff{mom} satisfies \reff{FT:y*}
when the relaxation order $k$ is sufficiently large.

\begin{theorem}   \label{thm:flat}
Suppose the optimal value $f_{\min}$ of \reff{nsdp} is achievable at a KKT point and
Assumption~\ref{ass:multi} holds.  Let $J$ be the ideal as in \reff{equI},
and let $g_1,\dots,g_{\ell_1}$ be the polynomials as in \reff{def:Gtheta}.
Assume at least one of the following conditions holds:
\bit	
\item[(i)] The real variety $V_{\mR}(J)$ is finite;

\item[(ii)] There exists $ \psi \in \qmod{G,\Theta}$ such that $\psi(x)<0$
for all $x\in V_{\mR}(J)$ with $f(x)<f_{\min}$, and the problem
\begin{equation} \label{nsdp:loc}
\left\{ \baray{cl}
\min & f(x)  \\
\st &  \nabla	f(x)-\nabla G(x)^*[\Theta(x)] = 0, \\
&  G(x)\Theta(x)=0,  \, \psi(x)\geq 0,
\earay \right.
\end{equation}
has finitely many minimizers.
\eit
Then, every minimizer $y^{(k)}$ of the moment relaxation \reff{mom}
satisfies the flat truncation
when the relaxation order $k$ is sufficiently large.
\end{theorem}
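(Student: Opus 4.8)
The plan is to reduce the statement to a uniform bound on the ranks of the moment matrices $M_t[y^{(k)}]$ and then read off flat truncation from monotonicity. After shifting $f$ by a constant, assume $f_{\min}=0$. Writing $J=\ideal{h_1,\dots,h_{\ell_2}}$ as in \reff{ideal:c1cell} and taking $g_1,\dots,g_{\ell_1}$ as in \reff{def:Gtheta}, the reformulation \reff{nsdp:equ} is the scalar problem of minimizing $f$ over $S:=\{x\in\mR^n:h_i(x)=0,\,g_j(x)\ge 0\}$, and $S\subseteq V_{\mR}(J)$. I would first record two routine facts about any $y$ feasible for \reff{mom}: (a) the conditions $L^{(k)}_{G\Theta}(y)=0$, $L^{(k)}_{\nabla f-\nabla G^*[\Theta]}(y)=0$ together with $M_k[y]\succeq 0$ force the coefficient vector of every $p\in J_k$ into $\ker M_k[y]$; and (b) since each $g_j\in\qmod{G,\Theta}$, the conditions $M_k[y]\succeq 0$, $L^{(k)}_{G}[y]\succeq 0$, $L^{(k)}_{\Theta}[y]\succeq 0$ imply $L^{(k-c_1)}_{g_j}[y]\succeq 0$ for a fixed constant $c_1$. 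Since $\ideal{G\Theta,\nabla f-\nabla G^*[\Theta]}+\qmod{G,\Theta}$ is Archimedean (the hypothesis of Theorem~\ref{thm:finite}; in case (i) it is automatic, a finite real variety yielding an Archimedean set), the feasible set of \reff{mom} is compact for all large $k$, so a minimizer $y^{(k)}$ exists, and $f_{k,mom}=f_{\min}=0$ by Theorem~\ref{thm:finite}. The mechanism is then: if I can find constants $C$ and $D$ with $\operatorname{rank}M_t[y^{(k)}]\le C$ for all $t\le k-D$ and all large $k$, then the nondecreasing sequence $j\mapsto\operatorname{rank}M_{jd_0}[y^{(k)}]$ ($1\le j\le\lfloor(k-D)/d_0\rfloor$) is bounded by $C$, hence two consecutive terms coincide at some index $j_0$ bounded only in terms of $C$; taking $t=j_0d_0$ gives \reff{FT:y*}, valid once $k$ is large enough.

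For case (i), $S\subseteq V_{\mR}(J)$ is finite. I would use the exactness of the truncated moment problem over a finite semialgebraic set: for $k$ large, every $y$ feasible for \reff{mom} with $y_0=1$ admits a representing measure supported on $S$, since fact (a) puts the coefficient vectors of $J_k$ into $\ker M_k[y]$ and fact (b) supplies the localizing positivity needed to cut $V_{\mR}(J)$ down to $S$. Hence a minimizer $y^{(k)}$ equals $\sum_{w\in S}\tau_w[w]_{2k}$ with $\tau_w\ge 0$ and $\sum_w\tau_w=1$, and its support $\{w:\tau_w>0\}$ lies in the fixed finite set $\mathcal M$ of minimizers of $f$ on $S$. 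Letting $t_1$ be the least degree making $\{[w]_{t_1}:w\in\mathcal M\}$ linearly independent, one gets $\operatorname{rank}M_t[y^{(k)}]=|\{w:\tau_w>0\}|\le|\mathcal M|$ for all $t\in[t_1,k]$, so $C=|\mathcal M|$ and $D=0$ work, and in fact \reff{FT:y*} already holds at $t=t_1+d_0$.

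For case (ii), let $\psi\in\qmod{G,\Theta}$ be as in the hypothesis. Since $\psi\in\qmod{G,\Theta}$, every KKT point $x$ of \reff{nsdp} (where $G(x)\succeq 0$ and $\Theta(x)\succeq 0$) satisfies $\psi(x)\ge 0$; together with the contrapositive of the hypothesis, $f\ge 0$ on $V_{\mR}(J)\cap\{\psi\ge 0\}$, and the fact that minimizers of \reff{nsdp:equ} are exactly the KKT pairs $(x,\Theta(x))$ with $f(x)=f_{\min}$, this shows the minimizer set of \reff{nsdp:equ} is contained in that of \reff{nsdp:loc}, which is finite by hypothesis. For $k$ large, $\psi\in\qmod{G,\Theta}$ together with $M_k[y^{(k)}]\succeq 0$, $L^{(k)}_{G}[y^{(k)}]\succeq 0$, $L^{(k)}_{\Theta}[y^{(k)}]\succeq 0$ yields $L^{(k-c_2)}_{\psi}[y^{(k)}]\succeq 0$ for a fixed $c_2$, so $y^{(k)}|_{2(k-c_2)}$ is feasible for the order-$(k-c_2)$ moment relaxation of \reff{nsdp:loc} with objective value $f_{\min}$, hence a minimizer of it. I would then apply the flat-truncation result for tight hierarchies of problems with finitely many minimizers \cite{nie2013certifying} (in its matrix-localizer form) to \reff{nsdp:loc}, obtaining flat truncation for $y^{(k)}|_{2(k-c_2)}$; since the base order $\hat d_0$ of \reff{nsdp:loc} satisfies $\hat d_0\ge d_0$ and moment-matrix ranks are monotone in the order, this transfers back to \reff{FT:y*} for $y^{(k)}$.

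I expect the main obstacle to be the technical underpinnings rather than the overall scheme. In case (i), because $J$ need not be radical, the exactness of the finite truncated moment problem must be extracted from the Archimedean set $J+\qmod{G,\Theta}$ together with facts (a) and (b), rather than from a Nullstellensatz on $J$ alone, and one must check that the matrix conditions $L^{(k)}_{G}\succeq 0$, $L^{(k)}_{\Theta}\succeq 0$ genuinely encode enough positivity to pin down $S$. In case (ii), the main work is establishing that the localized problem \reff{nsdp:loc} still satisfies what is needed to run the finitely-many-minimizers argument — in particular that $J+\qmod{\psi}$ is Archimedean, equivalently that its own Moment-SOS hierarchy is tight — and adapting that argument to matrix localizing constraints, keeping track of the degree shifts $c_1,c_2$ and the base order $\hat d_0$.
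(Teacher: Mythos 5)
Your case (i) is in substance the paper's own argument: the paper simply cites \cite{LLR08} (Proposition 4.6) for ideals with finite real variety, and your measure-plus-rank-bound sketch is a restatement of that known fact (your aside that Archimedeanness is automatic there is correct, since $V_{\mR}(J)$ finite makes $J+\Sigma[x]$ Archimedean, but it is not really needed for (i)).

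In case (ii), however, there is a genuine gap, and it sits exactly at the step you defer as ``the main work.'' Theorem~\ref{thm:flat} does \emph{not} assume that $\ideal{G\Theta,\nabla f-\nabla G^*[\Theta]}+\qmod{G,\Theta}$ is Archimedean, yet you invoke Theorem~\ref{thm:finite} to get $f_{k,mom}=f_{\min}$ and compactness of the feasible set of \reff{mom}; and for the localized problem \reff{nsdp:loc} you propose to establish that $J+\qmod{\psi}$ is Archimedean, ``equivalently'' that its hierarchy is tight. Neither Archimedean condition follows from the hypotheses (the critical variety $V_{\mR}(J)$ may well be unbounded; only the minimizer set of \reff{nsdp:loc} is assumed finite), and Archimedeanness is not equivalent to tightness. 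Moreover, the flat-truncation theorem you want to apply (\cite{nie2013certifying}, Theorem 2.6) requires, in this non-Archimedean setting, a certificate $f-f_{\min}+\epsilon\in J_{2N}+\qmod{\psi}_{2N}$ with a degree bound $N$ uniform in $\epsilon>0$, not merely convergence of optimal values. The paper's proof supplies precisely this: using the constant values $v_{r_0}<\cdots<v_r$ of $f$ on the components of the critical variety, it forms the interpolation-based SOS $\sigma=\sum_{i\ge r_0}v_i\,\varphi_i(f)^2$ so that $\hat f=f-\sigma$ vanishes identically on $V_{\mR}(J)\cap\{\psi\ge 0\}$, applies the (non-compact) Positivstellensatz \cite{bcr} to get $\hat f^{2\ell}+\sigma^*\in J$ with $\sigma^*\in\qmod{\psi}$, and then the standard $\epsilon$-perturbation of $\sqrt{1+\hat f/\epsilon}$ yields the uniform-degree representation; this simultaneously gives tightness of \reff{sos:loc}--\reff{moment:loc} and, since $\psi\in\qmod{G,\Theta}$, of \reff{sos}--\reff{mom}, after which your final step (restricting $y^{(k)}$ to a minimizer of \reff{moment:loc} and citing \cite{nie2013certifying}) goes through. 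Without constructing such a certificate, your case (ii) does not close, because the Archimedean route you sketch is unavailable under the stated assumptions.
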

%%%%%%%%%%%%%%%%%%%%%%%%%%%%%%%%%%%%%%%
\begin{proof}
Up to shifting $f$ by a constant, we  assume $f_{\min}=0$ for convenience.

(i) The conclusion follows from \cite[Proposition 4.6]{LLR08}.

(ii) Under the given assumptions,  problems \reff{nsdp:equ} and \reff{nsdp:loc} have the same optimal value, which is equal to $f_{\min}$.
The $k$th order SOS relaxation of \reff{nsdp:loc} is
\begin{equation}\label{sos:loc}
\left\{ \baray{rl}
\min & \gamma  \\
\st &  	f-\gamma \in \ideal{G  \Theta, \nabla f-
\nabla G^*[\Theta]}_{2k}+\qmod{\psi}_{2k}.\\
\earay \right.
\end{equation}
The dual problem of \reff{sos:loc} is the $k$th order moment relaxation:
\begin{equation}\label{moment:loc}
\left\{ \baray{rl}
\min & \langle f, z\rangle  \\
\st &  L^{(k)}_{G\cdot \Theta}(z)=0,~
L^{(k)}_{\nabla f-\nabla G^*[\Theta] }(z)=0,\\
& M_k(z) \succeq 0,~ L_{\psi}^{(k)}(z) \succeq 0,\\
&z_0 =1, \, z \in \re^{ \N_{2k}^n }.\\
\earay \right.
\end{equation}
Denote by $f^{\prime}_{k,\mathrm{sos}}$, $f^{\prime}_{k,\mathrm{mom}}$
the optimal values of \reff{sos:loc}
and \reff{moment:loc}, respectively.

Let $J_0,J_1,\dots,J_r \subseteq \mR[x]$ be the ideals  defined in \reff{decompI},
with $v_{r_0}=f_{\min}$ for some $1\leq r_0 \leq r$.
Note that $f \equiv v_i$ on $V_{\mathbb{C}}(J_i)$ for $i=1, \ldots,r$,
and $V_{\mathbb{C}}(J_0)\cap \mR^n=\emptyset$.  Let $\varphi_{r_0}, \dots, \varphi_r$
be the interpolating  polynomials such that for
$\varphi_i(v_i)=1$ and $\varphi_i(v_j)=0$ for $ j \neq i$. Define
\[
\sigma  \, \coloneqq  \,  \sum\limits_{i=r_0}^{r}v_i\cdot \left(\varphi_i(f)\right)^2.
\]
Then, the polynomial
$\hat{f}:=f-\sigma$ is identically zero  on the set
\[
V_{\mR}(J)\cap \left\{x \in \mathbb{R}^n:   \psi(x) \geq 0\right\}.
%\left\{x \in \mathbb{R}^n: \nabla f(x)-\nabla G(x)^* \Theta(x)=0, ~
% G(x)\Theta(x)=0,~  \psi(x) \geq 0\right\}.
\]
By the Positivstellensatz \cite[Corollary 4.4.3]{bcr},
there exist an integer $\ell > 0$ and $\sigma^*\in \qmod{\psi}$
such that $\hat{f}^{2 \ell}+\sigma^*\in J$. For  $\epsilon>0$, let
\[
\begin{gathered}
\phi_\epsilon=-\frac{1}{2 \ell} \epsilon^{1-2\ell}\Big(\hat{f}^{2 \ell}+\sigma^*\Big),  \\
\theta_\epsilon  = \epsilon \Big(1+\hat{f} / \epsilon  +
\frac{1}{2 \ell}(\hat{f} / \epsilon)^{2 \ell}\Big)  +
\frac{1}{2 \ell} \epsilon^{1-2\ell} \sigma^*.
\end{gathered}
\]
Since the polynomial $1+t+\frac{1}{2\ell}t^{2\ell}$ is an SOS,
there exists $k_1>0$ such  that for all $\epsilon>0$,
we have that $\phi_\epsilon\in J_{2k_1}$ and $\theta_\epsilon\in \qmod{G,\psi}_{2k_1}$.
Note that
\[
f+\epsilon  = \hat{f}+\epsilon+\sigma  =  \phi_\epsilon+\sigma_\epsilon+\sigma.
\]
Then, there exists an order $k_2\geq k_1$ such that for all $\epsilon>0$,
we have $f+\epsilon\in J_{2k_2}+\qmod{\psi}_{2k_2}$.
Thus, for all $\epsilon>0$,  $-\epsilon$ is  feasible for \reff{sos:loc}, which implies $f^{\prime}_{k,\mathrm{sos}}=f^{\prime}_{k,\mathrm{mom}}=f_{\min}$ for all $k\geq k_2$.
Since  $ \psi \in \qmod{G,\Theta}$, we also have
$f_{k,\mathrm{sos}}=f_{k,\mathrm{mom}}=f_{\min}$ for  $k$ sufficiently large.
Since  every minimizer $y^{(k)}$ of \reff{mom} also gives a minimizer of \reff{moment:loc},
by using subvectors, the conclusion follows from \cite[Theorem 2.6]{nie2013certifying}.
\end{proof}

\section{Numerical experiments}
\label{sec:num}
This section  presents numerical examples of the strengthened hierarchy
\reff{mom}--\reff{sos} for solving MPO \reff{nsdp}.
The computation is implemented in MATLAB R2024a, on a Lenovo Laptop with CPU@1.40 GHz and RAM 32 {GB}.
The relaxations \reff{mom}--\reff{sos} are modeled using the software Yalmip \cite{yalmip},
and the semidefinite programs are solved by SeDuMi \cite{sturm}.

For a degree $k$, the $k$th order standard moment relaxation of \reff{nsdp} is
\begin{equation}
\label{p_mom}
\left\{
\begin{array}{rl}
\min & \langle f,y\rangle \\
\st & L^{(k)}_G(y)\succeq 0,\,M_k(y)\succeq 0,\\
& y_0=1,\,y\in \r^{\n_{2k}^n}.
\end{array}\right.
\end{equation}
The dual  problem of \reff{p_mom} is the $k$th order SOS relaxation:
\begin{equation}
\label{p_sos}
\left\{
\begin{array}{rl}
\max &\gamma  \\
\st & f-\gamma \in \qmod{G}_{2k}.
\end{array}
\right.
\end{equation}
We compare the performance of the strengthened relaxations \reff{mom}--\reff{sos}
with the standard  relaxations  \reff{p_mom}--\reff{p_sos}.
The obtained lower bounds and consumed computational times are presented in the tables.
The columns labeled ``with LME" correspond to  the strengthened relaxations  \reff{mom}--\reff{sos}  using the  polynomial Lagrange multiplier expression (LME) $\Lambda=\Theta(x)$,
while the columns labeled ``without LME" represent the standard relaxations \reff{p_mom}--\reff{p_sos}.
All computational times are reported in seconds, and numerical results are rounded to four decimal digits for clarity.

\begin{example}\label{exm7.1}
Consider the matrix polynomial optimization:
\begin{equation} \nonumber
\left\{ \baray{rl}
\min & x_1^2+x_2^2\\
\st &  	\begin{bmatrix}
x_1^2-2 & \frac{1}{2}x_1x_2 \\
\frac{1}{2}x_1x_2 & x_2^2-2 \\
\end{bmatrix}\succeq 0.
\earay \right.
\end{equation}
The optimal value $f_{\min}=8$ and the minimizers are $(\pm 2, \pm 2)$. By Example \ref{exmLME} (ii), we have the Lagrange multiplier matrix expression
\[
\Theta(x)=\begin{bmatrix}
\frac{1}{2}x_1^2 & x_1x_2-\frac{3}{16}x_1^3x_2-\frac{3}{16}x_1x_2^3\\
x_1x_2-\frac{3}{16}x_1^3x_2-\frac{3}{16}x_1x_2^3 & \frac{1}{2}x_2^2
\end{bmatrix}.
\]
The computational results  are presented  in Table \ref{ex5.5}.
For the strengthened hierarchy, we get $f_{3,\mathrm{sos}} = f_{\min}$
and all minimizers are extracted at the order $k=3$.
In contrast,  the standard hierarchy  converges at the order $k=8$, which is much slower.
%\ter{We remark that the Moment–SOS relaxations proposed in \cite{HuKlepNie25} for solving the semi-infinite program reformulation \reff{sp10} fails to provide an efficient lower bound, and can produce a lower bound of $-1.0247$ in around 100 seconds at the relaxation order $k=5$.}

\begin{table}[H]
\centering
\caption{Computational results for Example \ref{exm7.1}}
\label{ex5.5}
\renewcommand{\arraystretch}{1.1}
\begin{tabular}{|c|c|c|c|c|}
\hline
\multirow{2}{*}{order $k$} & \multicolumn{2}{c|}{without LME} & \multicolumn{2}{c|}{with LME} \\
\cline{2-5}
& lower bound & time & lower bound & time \\
\hline
3 & 4.0003 & 0.2358 & 8.0000 & 0.2921 \\
\hline
4 & 4.0260 & 0.3463 & \multicolumn{1}{c}{} & \multicolumn{1}{c|}{} \\
\cline{1-3}
5 & 4.2806 & 0.5764 & \multicolumn{1}{c}{} & \multicolumn{1}{c|}{} \\
\cline{1-3}
6 & 5.3344 & 0.9873 & \multicolumn{1}{c}{} & \multicolumn{1}{c|}{} \\
\cline{1-3}
7 & 7.3760 & 1.7722 & \multicolumn{1}{c}{} & \multicolumn{1}{c|}{} \\
\cline{1-3}
8 & 8.0000 & 2.0490 & \multicolumn{1}{c}{} & \multicolumn{1}{c|}{} \\
\cline{1-5}
\end{tabular}
\end{table}
\end{example}

\begin{example}\label{exm7.5}
Consider the matrix polynomial optimization:
\begin{equation} \nonumber
\left\{ \baray{rl}
\min & (x_1-1)(x_4-1)-(x_2-1)^2+0.1\cdot(x_1+x_4+x_6-3) \\
\st & \left[\begin{array}{ccc}
x_1& x_2&x_3 \\
x_2 &x_4&x_5\\
x_3&x_5&x_6\\
\end{array}\right]\succeq 0.
\earay \right.
\end{equation}
The optimal value $f_{\min}=-0.3$ and the unique minimizer is $0$. We have
$
P_1 = \operatorname{diag}(1,2,1,2,2,1).
$
As shown in Example \ref{exmLME} (i),
the equation $L(x)P(x)=I_6$ is satisfied for
\[
L(x) \, =  \,  \begin{bmatrix}
\operatorname{diag}(1,\frac{1}{2},1,\frac{1}{2},\frac{1}{2},1) & \,\,0
\end{bmatrix}.
\]
We have the Lagrange multiplier matrix expression
\[
\Theta(x)  =
\left[\begin{array}{ccc}
f_{x_1} & \frac{1}{2} f_{x_2} &   \frac{1}{2} f_{x_3} \\
\frac{1}{2} f_{x_2} &  f_{x_4} & \frac{1}{2} f_{x_5}  \\
\frac{1}{2} f_{x_3} &  \frac{1}{2} f_{x_5}  &  f_{x_6} \\
\end{array}\right].
\]
The
computational results are shown in Table~\ref{ex5.1}.
For the strengthened hierarchy, we get
$f_{1,\mathrm{sos}} = f_{\min}$ and the minimizer  is extracted at the order $k=2$.
In contrast, the standard hierarchy only yields a looser lower bound  -23.4823  at  the order $k=5$, and it takes approximately $2401$ seconds.
\begin{table}[htb]
\centering
\caption{Computational results for Example \ref{exm7.5}}
\label{ex5.1}
\renewcommand{\arraystretch}{1.1}
\begin{tabular}{|c|c|c|c|c|}
\hline
\multirow{2}{*}{order $k$} & \multicolumn{2}{c|}{without LME} & \multicolumn{2}{c|}{with LME} \\
\cline{2-5}
& lower bound & time & lower bound & time \\
\hline
1 & $-\infty$ & 0.1347 & -0.3000 & 0.1504 \\
\hline
2 & -1.2676$\cdot 10^{5}$ & 0.4608 & -0.3000 & 0.3588 \\
\hline
3 & -128.7776 & 6.6117 & \multicolumn{1}{c}{} & \multicolumn{1}{c|}{} \\
\cline{1-3}
4 & -42.6204 & 112.2368 & \multicolumn{1}{c}{} & \multicolumn{1}{c|}{} \\
\cline{1-3}
5 & -23.4823 & 2401.2189 & \multicolumn{1}{c}{} & \multicolumn{1}{c|}{} \\
\cline{1-5}
\end{tabular}
\end{table}
\end{example}

\begin{example}\label{exm7.6}
Consider the matrix polynomial optimization:
\begin{equation}  \nonumber
\left\{ \baray{rl}
\min & (1+2x_1)(1+2x_3)-(x_3-x_4)^2+x_1+x_3+x_6 \\
\st &  \left[\begin{array}{ccc}
1+2x_1& x_3-x_4&x_5-x_6 \\
x_3-x_4 &1+2x_3&x_1-x_2 \\
x_5-x_6&x_1-x_2&1+2x_6  \\
\end{array}\right]\succeq 0 .
\earay \right.
\end{equation}
The minimum value $f_{\min}=-\frac{3}{2}$
and the unique minimizer is $-\frac{1}{2}\cdot(1,1,1,1,1,1)$.
As shown in Example \ref{exmLME} (i), the equation $L(x)P(x)=I_6$ is satisfied for
\[
%\begin{small}
L(x) \, =  \,  \begin{bmatrix}
    \frac{1}{2} & \frac{1}{2} & 0 & 0 & 0 & 0&0&\cdots & 0\\
    0 & 0&0&-\frac{1}{2} & 0&0&0&\cdots & 0\\
    0&0&\frac{1}{2}&\frac{1}{2}&0&0&\cdots & 0\\
    0 & 0& 0&0 & \frac{1}{2} & 0&0&\cdots & 0\\
    0 & -\frac{1}{2} & 0 & 0&0&0&0&\cdots & 0\\
    0&0&0&0&\frac{1}{2}&\frac{1}{2}&0&\cdots & 0
\end{bmatrix}.
%\end{small}
\]
We get the Lagrange multiplier matrix expression
\[
\Theta(x)  = \frac{1}{2}
\left[\begin{array}{ccc}
f_{x_1} + f_{x_2} &  -f_{x_4} &    f_{x_3}+f_{x_4} \\
-f_{x_4} &  f_{x_5} &   -f_{x_2}  \\
f_{x_3}+f_{x_4} &  -f_{x_2}  &  f_{x_5}+f_{x_6} \\
\end{array}\right].
\]
The computational results are presented in Table~\ref{ex5.2}.
For the strengthened hierarchy, we get $f_{1,\mathrm{sos}} = f_{\min}$
and the minimizer is extracted at the order  $k=2$.
In contrast, the standard hierarchy  converges much slower.
\begin{table}[htb]
\centering
\caption{Computational results for Example \ref{exm7.6}}
\label{ex5.2}
\renewcommand{\arraystretch}{1.1}
\begin{tabular}{|c|c|c|c|c|}
\hline
\multirow{2}{*}{order $k$} & \multicolumn{2}{c|}{without LME} &
\multicolumn{2}{c|}{with LME} \\
\cline{2-5}
& lower bound & time & lower bound & time \\
\hline
1 & $-\infty$ & 0.1289 & -1.5000 & 0.1792 \\
\hline
2 & -6.9594$\cdot 10^{5}$ & 0.6131 & -1.5000 & 0.4168 \\
\hline
3 & -26.6785 & 7.4711 & \multicolumn{1}{c}{} & \multicolumn{1}{c|}{} \\
\cline{1-3}
4 & -1.5000 & 32.8910 & \multicolumn{1}{c}{} & \multicolumn{1}{c|}{} \\
\cline{1 -5}
\end{tabular}
\end{table}
\end{example}  		
\begin{example}\label{exm7.2}
Consider the matrix polynomial optimization:
\begin{equation} \nonumber
\left\{ \baray{rl}
\min & x_1^3+x_2^3+x_3^3+4x_1x_2x_3-x_1(x_2^2+x_3^2)-x_2(x_1^2
+x_3^2)-x_3(x_2^2+x_1^2) \\
\st &  	\left[\begin{array}{ll}
x_1 & x_1x_2-1\\
x_1x_2-1& x_2x_3-1\\
\end{array}\right]\succeq 0.
\earay \right.
\end{equation}
The objective is a variation of Robinson's form \cite{reznick2000some}.
We have the Lagrange multiplier matrix expression $\Theta(x)$ as in Example~\ref{exmLME} (iii).
The computational results are presented in Table~\ref{ex5.4}.
For the strengthened hierarchy, we get $f_{3,\mathrm{sos}} = f_{\min}$
and the minimizer $(0.6798,1.0876,1.0113)$ is extracted at the order $k=5$.
In contrast, the standard hierarchy converges at the order $k=8$, which is much slower.
\begin{table}[H]
\centering
\caption{Computational results for Example \ref{exm7.2}}
\label{ex5.4}
\renewcommand{\arraystretch}{1.1}
\begin{tabular}{|c|c|c|c|c|}
\hline
\multirow{2}{*}{order $k$} & \multicolumn{2}{c|}{without LME} & \multicolumn{2}{c|}{with LME} \\
\cline{2-5}
& lower bound & time & lower bound & time \\
\hline
2 & $-\infty$ & 0.1333 & $-\infty$ & 0.1743 \\
\hline
3 & -1.0669$\cdot 10^{7}$ & 0.2902 & 0.8479 & 0.2808 \\
\hline
4 & -6.6884$\cdot 10^{4}$ & 0.7089 & 0.8479 & 0.6819 \\
\hline
5 & -3.6965$\cdot 10^{3}$ & 1.8897 & 0.8479 & 1.5210 \\
\hline
6 & -563.0493 & 5.2481 & \multicolumn{1}{c}{} & \multicolumn{1}{c|}{} \\
\cline{1-3}
7 & -142.4861 & 13.5362 & \multicolumn{1}{c}{} & \multicolumn{1}{c|}{} \\
\cline{1-3}
8 & 0.8479 & 19.6211 & \multicolumn{1}{c}{} & \multicolumn{1}{c|}{} \\
\cline{1-5}
\end{tabular}
\end{table}
\end{example}

\begin{example}
\label{exlower}
    {Consider the matrix polynomial optimization:
    \be  \nonumber
\left\{ \baray{rl}
\min & x_1x_2-x_2x_3 \\
\st & \left[\begin{array}{cc}
	x_1-x_2& x_2+x_3^2 \\
	x_2+x_3^2 & x_3\\
\end{array}\right]\succeq 0.
\earay \right.
\ee
The Lagrange multiplier matrix expression is
\[
\Theta(x)=\begin{bmatrix}
    f_{x_1} & \frac{1}{2}f_{x_1}+\frac{1}{2}f_{x_2}\\
    \frac{1}{2}f_{x_1}+\frac{1}{2}f_{x_2} & f_{x_3}-2x_3f_{x_1}-2x_3f_{x_2}
\end{bmatrix}.
\]
%The optimal value $f_{min}= -0.0164$ and the unique minimizer is $(0.3375,
 %   0.0829,
  %  0.5348).$
    The computational results are shown in Table \ref{table_lower}. For the strengthened hierarchy, we get $f_{3,\mathrm{sos}}=f_{\min}$ and the minimizer $(0.3375,
    0.0829,
    0.5348)$ is  extracted at the order $k=4$. In contrast, the standard hierarchy  converges much slower.
\begin{table}[htb]
	\centering
	\caption{Computational results for Example \ref{exlower}}
	\label{table_lower}
	\begin{tabular}{|c|c|c|c|c|}
		\hline
		\multirow{2}{*}{order $k$}& \multicolumn{2}{c|}{without LME}& \multicolumn{2}{c|}{with LME}\\
		\cline{2-5}  %在第1行的2至5列下化横线
		& lower bound &time &lower bound &time\\
		\hline
		3 &-6.2083$\cdot10^3 $&1.2737& -0.0164&1.2208\\
		\hline
		4 & -298.9214 &2.5482&-0.0164&2.3834\\
		\hline
		5 &-32.9141&6.1536& & \\
		\hline
        6 & -3.7074 & 13.3707 & & \\ \hline
        7 & -2.1665 & 29.6775 & & \\ \hline
        8 & -1.5234 & 65.3710 & & \\ \hline
	\end{tabular}
\end{table}}

\end{example}

\section{Conclusions and discussions}
\label{sc:dis}
We propose a strengthened Moment-SOS hierarchy for solving matrix polynomial optimization, by using Lagrange multiplier matrix expressions.
First, we show that polynomial  expressions for Lagrange multiplier matrices exist under the  nondegeneracy condition.
Second, we prove that this strengthened hierarchy is tight under some general assumptions.
Third, we show that every minimizer of the moment relaxation  has a flat truncation
when the relaxation order is sufficiently large.
The strong performance of this strengthened Moment-SOS hierarchy
is demonstrated through various numerical experiments.

We remark that the MPO \reff{nsdp} can be reformulated as the semi-infinite program (SIP)
\begin{equation} \label{sp10}
\left\{ \baray{rl}
\min & f(x)  \\
\st &  	y^TG(x)y  \geq 0 \quad   \forall \, y \in \mR^m:  \|y\|^2 \le 1. \\
\earay \right.
\end{equation}
However,  \reff{sp10} is more challenging to solve, as it involves a universal quantifier.
There exist asymptotically convergent Moment–SOS relaxation methods for solving SIPs \cite{HuKlepNie25,HuNie25}, but none of these methods
possesses finite convergence.
In fact, their convergence speed may be very slow.
%
%Consequently, their numerical performance may be less favorable for
%the problems considered here, since these hierarchies do not fully exploit the problem structure.
%
To illustrate this, we report the performance of the SIP method in \cite{HuNie25} for solving Examples \ref{exm7.1}–\ref{exlower}:
\bit
\item For Example \ref{exm7.1}, the SIP method  
produces the lower bound $2.0000$ after $100$ iterations, 
which took around $48.03$ seconds.
\item 
For Example \ref{exm7.5}, the SIP method  
produces a lower bound of $-120.5554$ after $100$ iterations, 
which took around $77.09$ seconds.
\item 
For Example \ref{exm7.6}, the SIP method produces a lower bound of $-297.4388$ after $100$ iterations,
which took around $67.62$ seconds.
\item 
For Example \ref{exm7.2}, the SIP method produces a lower bound of $-44.2973$ after $100$ iterations, 
which took around $41.04$ seconds.
\item
For Example \ref{exlower}, the SIP method produces a lower bound of $-7.0711$ after $100$ iterations, 
which took around $17.82$ seconds. 
\eit
In contrast, the strengthened Moment-SOS method with LMEs
converges much faster, so it is 
more suitable for solving  \reff{nsdp}.

\bigskip

\noindent
{\bf Funding}
Jiawang Nie and Jiajia Wang are partially supported by
the NSF grants DMS-2110780 and  DMS-2513254.

\end{document}